 \def\oldpageref#1{%
 \expandafter\old@setref\csname r@#1\endcsname\@secondoffour{#1}%
 }}{}%
\newtheorem{theorem}{Theorem}
\newtheorem{lemma}[theorem]{Lemma}
\newtheorem{proposition}[theorem]{Proposition}
\newcommand{\opvtex}{\operatorname{\Pi}}
\newcommand{\opp}{\operatorname{\Pi}^*\!}
\newcommand{\ofpp}[1]{\operatorname{\Pi^{\ast,{\it #1}}}\!}
\newcommand{\omr}[1]{\operatorname{R}_{#1}}
\newcommand{\Rvtex}{\mathbb R}
\newcommand{\Bvtex}{{B^n}}
\newcommand{\Svtex}{\Delta}
\newcommand{\snvtex}{{\mathbb S}^{n-1}}
\newcommand{\borel}{\mathcal{B} (\Rvtex^n)}
\newcommand{\proj}{\operatorname{proj}}
\newcommand{\dVlog}{\tilde V_{\log}}
\newcommand{\sgn}{\operatorname{sgn}}
\newcommand{\dvtex}{\,\mathrm{d}}
\newcommand{\Dvtex}{\mathrm{D}}
\newcommand{\vol}[1]{ \vert #1\vert}
\renewcommand \div {\operatorname{div}}
\renewcommand{\chi}{\operatorname{1}}
\begin{document}

\title[Affine fractional Sobolev inequalities]{Affine fractional Sobolev and isoperimetric inequalities} %S.c.

%\dedicatory{}
\author[J. Haddad ]{Juli\'an Haddad}
\address{Departamento de An\'{a}lisis Matem\'{a}tico\\ Facultad de Matem\'{a}ticas\\ Universidad de Sevilla\\ Sevilla, Spain\\}
\email{jhaddad@us.es}
\author[M. Ludwig]{Monika Ludwig}
\address{Institut f\"ur Diskrete Mathematik und Geometrie\\
Technische Universit\"at Wien\\
Wiedner Hauptstra\ss e 8-10/1046\\
1040 Wien, Austria\\}
\email{monika.ludwig@tuwien.ac.at}
\makeatother
%\author[N.~N. Surname]{Name Name Surname}%$^{*}$}
%\address{department\\
%organisation\\
%city, post code\\
%country\\}
%\email{email@sample.edu}
%%\thanks{$^{*}$Something something something.}
%
%\author[N.~N. Surname]{Name Name Surname}%$^{\dagger}$}
%\address{department\\
%organisation\\
%city, post code\\
%country\\}
%\email{email@sample.edu}
%%\thanks{$^{\dagger}$Something something something.}

\begin{abstract}
Sharp affine fractional Sobolev inequalities for functions on
$\mathbb R^{n}$ are established. For each $0<s<1$, the new inequalities
are significantly stronger than (and directly imply) the sharp fractional
Sobolev inequalities of Almgren and Lieb. In the limit as
$s\to 1^{-}$, the new inequalities imply the sharp affine Sobolev inequality
of Gaoyong Zhang. As a consequence, fractional Petty projection inequalities
are obtained which are stronger than the fractional Euclidean isoperimetric
inequalities, and a natural conjecture for radial mean bodies is proved.

\end{abstract}

\subjclass{Primary 46E35, Secondary 35R11, 52A40.}
%Only MSC codes, NO regular keywords

\date{July 26, 2022} %for example: \date{April 14, 2014}

\maketitle

%\tableofcontents %leave if >50 pages
%%LEAP%%\maketitle

%s1 #&#
\section{Introduction}

Almgren and Lieb \cite{AlmgrenLieb} established P\'olya--Szeg\H o inequalities
for fractional Sobolev norms. The equality case was settled by Frank and
Seiringer \cite{FrankSeiringer}. A consequence is the fractional Sobolev
inequality (cf.~\cite{Ludwig:fracperi,FrankSeiringer}): for $0<s<1$, there
is $\alpha _{n,s}>0$ (with explicit value given in~\eqref{eq_alpha}) such
that for $f\in W^{s,1}(\mathbb R^{n})$,
%
%e1 #&#
\begin{equation}
\label{eq_fracsobo}
\Vert f\Vert _{{\frac {n}{n-s}}} \le \alpha _{n,s}\int _{\mathbb R^{n}}
\int _{\mathbb R^{n}}
\frac{\vert f(x)-f(y)\vert}{\vert x-y \vert ^{n+s}}\dvtex x\dvtex y,
\end{equation}
and there is equality if and only if $f$ is, up to sets of measure zero,
a constant multiple of the indicator function of a ball. Here
$\Vert f\Vert _{p}$ denotes the $L^{p}$ norm of $f$ and
$\vert \cdot \vert $ the Euclidean norm in $\mathbb R^{n}$ while
$W^{s,1}(\mathbb R^{n})$ denotes the fractional Sobolev space of
$L^{1}$ functions with finite right-hand side in~\eqref{eq_fracsobo}. By
a result of Bourgain, Brezis, and Mironescu~\cite{BourgainBrezisMironescu},
for $f\in W^{1,1}(\mathbb R^{n})$ with compact support,
%
%e2 #&#
\begin{equation}
\label{eq_BBM}
\lim _{s\to 1^{-}} \alpha _{n,s}\int _{\mathbb R^{n}} \int _{
\mathbb R^{n}} \frac{\vert f(x)-f(y)\vert}{\vert x-y \vert ^{n+s}}\dvtex x
\dvtex y = \frac {1}{n\omega _{n}^{1/n}}\int _{\mathbb R^{n}} \vert
\nabla f(x)\vert \dvtex x,
\end{equation}
where $\omega _{n}$ is the $n$-dimensional volume of the unit ball in
$\mathbb R^{n}$, and $W^{1,1}(\mathbb R^{n})$ denotes the Sobolev space
of $L^{1}$ functions $f$ with weak $L^{1}$ gradient $\nabla f$. D\'avila~\cite{Davila}
proved a result corresponding to~\eqref{eq_BBM} for compactly supported
$f\in BV(\mathbb R^{n})$, the space of $L^{1}$ functions with bounded variation,
and the assumption on the support can be dropped (see
\cite[Theorem 1.4]{AlbericoCianchiPickSlavikov}). Hence, the sharp
$L^{1}$ Sobolev inequalities on $W^{1,1}(\mathbb R^{n})$ and
$BV(\mathbb R^{n})$ follow from the fractional Sobolev inequalities~\eqref{eq_fracsobo}.

Gaoyong Zhang \cite{Zhang99} established a sharp affine Sobolev inequality
that is significantly stronger than the classical $L^{1}$ Sobolev inequality.
For $n\ge 2$ and $C^{1}$ functions $f: \mathbb R^{n}\to \mathbb R$ with
compact support, Zhang \cite{Zhang99} proved that
%
%e3 #&#
\begin{equation}
\label{eq_AffineSobolev}
\begin{split}
\Vert f\Vert _{{\frac {n}{n-1}}}
&\le \frac{ \omega _{n}}{ 2 \omega _{n-1}} \Big(\frac{1}{n} \int _{\snvtex} \Vert \langle \nabla f(\cdot ),\xi \rangle \Vert _{1}^{-n} \dvtex \xi \Big)^{-1/n}
\\
&\le \frac {1}{n\omega _{n}^{1/n}}\int _{\mathbb R^{n}} \vert \nabla f(x) \vert \dvtex x.
\end{split}
\end{equation}
Here $\langle \cdot ,\cdot \rangle $ denotes the inner product, and integration
on the $(n-1)$-dimensional unit sphere $\snvtex $ is with respect to the
$(n-1)$-dimensional Hausdorff measure. The first inequality in~\eqref{eq_AffineSobolev} is now called the affine Sobolev inequality or
the Sobolev--Zhang inequality. It is affine since both terms remain invariant
under volume-preserving affine transformations. Tuo Wang
\cite{Tuo_Wang} extended the Sobolev--Zhang inequality to $L^{1}$~functions
of bounded variation and showed that there is equality, up to sets of measure
zero, in this generalized Sobolev--Zhang inequality precisely for constant
multiples of indicator functions of ellipsoids.

The main aim of this paper is to establish affine fractional Sobolev inequalities
for $0<s<1$ that are stronger than the fractional Sobolev inequalities
in~\eqref{eq_fracsobo} and imply, in the limit as $s\to 1^{-}$, the Sobolev--Zhang
inequality.

%t1 #&#
\begin{theorem}%
\label{thm_afracsobo}
For $0<s<1$ and $f\in W^{s,1}(\mathbb R^{n})$,
%
%e1 #&#
\begin{multline}
\Vert f\Vert _{{\frac {n}{n-s}}}
\\
\,\,\,\le \alpha _{n,s} n \omega _{n}^{\frac{n+s}{n}}\Big(
\frac {1}{n}\int _{\snvtex}\Big( \int _{0}^{\infty }t^{-s}\, \Big\Vert
\frac{f(\cdot +t\xi )-f(\cdot )}{t}\Big\Vert _{1}\dvtex t\Big)^{-
\frac {n} {s}}\dvtex \xi \Big)^{- \frac {s} {n}}
\\
\le \alpha _{n,s}\int _{\mathbb R^{n}} \int _{\mathbb R^{n}}
\frac{\vert f(x)-f(y)\vert}{\vert x-y \vert ^{n+s}}\dvtex x\dvtex y.
\nonumber \end{multline}
There is equality in the first inequality if and only if $f$ is, up to
sets of measure zero, a constant multiple of the indicator function of
an ellipsoid. There is equality in the second inequality for radially symmetric
functions.
\end{theorem}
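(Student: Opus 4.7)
The plan is to reduce both inequalities to inequalities for the spherical functional
$$F(\xi) := \int_0^\infty t^{-s}\Big\|\frac{f(\cdot+t\xi)-f(\cdot)}{t}\Big\|_1\, dt, \qquad \xi \in \sn.$$
A polar change of variables $y = x + t\xi$ (with $t > 0$, $\xi \in \sn$, $dy = t^{n-1}dt\, d\xi$) converts the right-hand side of Theorem~\ref{thm_afracsobo} into $\int_{\sn}F(\xi)\,d\xi$, so both the middle and right quantities in the theorem are functionals of $F$ on $\sn$.

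For the second inequality, the plan is to apply Jensen's inequality. With $d\sigma = d\xi/(n\omega_n)$ the normalized probability measure on $\sn$, monotonicity of $L^p(\sigma)$-means gives, since $-n/s < 1$,
$$\Big(\int_{\sn} F^{-n/s}\,d\sigma\Big)^{-s/n} \le \int_{\sn} F\,d\sigma;$$
rescaling back to $d\xi$ recovers exactly the second inequality. Equality holds iff $F$ is $\sigma$-a.e.\ constant, which is automatic when $f$ is radially symmetric.

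For the first inequality, which is the affine heart of the theorem, the plan is to reformulate it as an affine isoperimetric inequality for a star body associated to $f$. Define $L_s(f)$ by its radial function $\rho_{L_s(f)}(\xi) := F(\xi)^{-1/s}$, so that $|L_s(f)| = n^{-1}\int_{\sn}F(\xi)^{-n/s}d\xi$; the first inequality then takes the compact form
$$\|f\|_{n/(n-s)}\cdot |L_s(f)|^{s/n} \le \alpha_{n,s}\, n\omega_n^{(n+s)/n}.$$
Following the template of Zhang's proof of \eqref{eq_AffineSobolev}, I would first verify that $f \mapsto L_s(f)$ is $SL(n)$-covariant (so the left-hand side is affine invariant), then combine the Almgren--Lieb Pólya--Szegő principle, which preserves $\|f\|_{n/(n-s)}$ and does not increase the fractional seminorm under symmetric decreasing rearrangement, with a \emph{fractional Petty projection inequality} of the form $|L_s(f)| \le |L_s(f^*)|$.

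The main obstacle is this fractional Petty projection inequality; it is a genuinely affine-geometric statement, not a direct consequence of Pólya--Szegő. The expected approach is a Steiner symmetrization argument paralleling the Lutwak--Yang--Zhang proof of the $L^p$ Petty projection inequality: show that each Steiner symmetrization of $f$ does not decrease $|L_s(f)|$, and that iterated Steiner symmetrizations converge to the radial rearrangement. Once this is in place, the inequality reduces to the radial case $f = f^*$, where $F$ is spherically constant and the chain collapses into the sharp Almgren--Lieb inequality \eqref{eq_fracsobo}. Equality in the first inequality is obtained by combining the Frank--Seiringer characterization of equality in Pólya--Szegő (forcing $f$ radial after translation) with the equality case of the fractional Petty projection ($L_s(f)$ must be an ellipsoid), producing a constant multiple of the indicator of an ellipsoid.
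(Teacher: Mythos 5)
Your reduction of the theorem to the spherical functional $F(\xi)=\Vert\xi\Vert^s_{\ofpp sf}$ and your reformulation of the first inequality as the affine isoperimetric statement $\Vert f\Vert_{n/(n-s)}\,\vol{\ofpp sf}^{s/n}\le\alpha_{n,s}\,n\omega_n^{(n+s)/n}$ are both correct, and the identification of $|L_s(f)|\le|L_s(f^\star)|$ (the affine fractional P\'olya--Szeg\H o inequality) as the crux matches the paper. Your treatment of the second inequality via monotonicity of $L^p$-means on $(\sn,\sigma)$ is just the dual mixed volume inequality $\tilde V_{-s}(\B,\ofpp sf)\ge\omega_n^{(n+s)/n}\vol{\ofpp sf}^{-s/n}$ in disguise, so it agrees with the paper. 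The overall logical skeleton -- $\Vert f\Vert_{n/(n-s)}=\Vert |f|^\star\Vert_{n/(n-s)}$, affine P\'olya--Szeg\H o, and collapse of the radial case to Almgren--Lieb -- is valid and a genuinely different route from the paper's: the paper instead decomposes $f$ into superlevel sets via the fractional co-area formula, applies the fractional Petty projection inequality to each level set $\{f\ge t\}$, and then uses a Minkowski-type integral inequality (Lemma~\ref{lem_zhang_s}) to reassemble $\Vert f\Vert_{n/(n-s)}$. Your functional route is shorter once the affine P\'olya--Szeg\H o inequality is in hand, while the paper's level-set route simultaneously yields the geometric fractional Petty projection inequality for sets, which is of independent interest.

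The genuine gap is in your proposed proof of the affine P\'olya--Szeg\H o inequality. You sketch a Steiner symmetrization program (show $\vol{\ofpp s f^\xi}\ge\vol{\ofpp sf}$ and pass to the limit of iterated Steiner symmetrizations). That program is plausible but incurs real technical debt: one must prove convergence of iterated Steiner symmetrals to the Schwarz symmetral in a topology for which $f\mapsto\vol{\ofpp sf}$ is continuous, and the equality case becomes delicate to extract from an iterative scheme. The paper avoids this entirely by introducing the anisotropic fractional seminorm $\iint|f(x)-f(y)|\,\Vert x-y\Vert_K^{-(n+s)}\d x\d y = n\tilde V_{-s}(K,\ofpp sf)$ for a general star body $K$, observing that the Riesz rearrangement inequality yields $\tilde V_{-s}(K,\ofpp sf)\ge\tilde V_{-s}(K^\star,\ofpp sf^\star)$ for every $K$, and then closing the loop with the dual mixed volume inequality by choosing $K=\ofpp sf$. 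This gives the affine inequality in one stroke, with the equality case supplied by Burchard's theorem. The conceptual point you are missing is that the affine content is already encoded in the anisotropic seminorm; it does not require a separate Petty-type geometric argument or a Steiner scheme. Finally, two minor imprecisions: Frank--Seiringer governs equality in the Euclidean Almgren--Lieb inequality applied to $f^\star$ (forcing $f^\star$ to be a constant multiple of a ball indicator), while it is the equality case of the affine P\'olya--Szeg\H o (ultimately Burchard's theorem) that forces the level sets of $f$ to be ellipsoids -- your sketch conflates the two; and the reduction from general $f$ to $|f|$ via $\vol{\ofpp s{|f|}}^{-s/n}\le\vol{\ofpp sf}^{-s/n}$ should be stated explicitly.
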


\noindent
In order to prove Theorem~\ref{thm_afracsobo}, we introduce the $s$-fractional
polar projection body $\ofpp{s} f$ associated to $f$, defined as the star
body whose gauge function (that is, the reciprocal of the radial function)
for $\xi \in \snvtex $ is
%
%e1 #&#
\begin{equation}
%!LEAP WARNING:  script commented \label{eq_defpp}
\Vert{\xi}\Vert _{\ofpp{s}f}^{s}= \int _{0}^{\infty }t^{-s}\, \int _{
\mathbb R^{n}}\Big\vert \frac{f(x +t\xi )-f(x)}{t}\Big\vert \dvtex x\dvtex t
\nonumber \end{equation}
(see Section~\ref{sec_fpp} for details). The affine fractional Sobolev
inequality can now be written as
%
%e4 #&#
\begin{equation}
\label{eq_fracsobvol}
\Vert f\Vert _{{\frac {n}{n-s}}} \le \alpha _{n,s} n \omega _{n}^{({n+s})/n}
\vol{\ofpp{s} f}^{- s/ n},
\end{equation}
where $\vol{\cdot}$ denotes $n$-dimensional Lebesgue measure. Since both
sides of~\eqref{eq_fracsobvol} are invariant under translations of
$f$, and for volume-preserving linear transformations
$\phi : \mathbb R^{n}\to \mathbb R^{n}$,
\begin{equation*}
\ofpp{s} \,(f\circ \phi ^{-1}) = \phi \ofpp{s} f,
\end{equation*}
it follows that~\eqref{eq_fracsobvol} is an affine inequality.

To show that Theorem~\ref{thm_afracsobo} implies the Sobolev--Zhang inequality,
we establish a result corresponding to~\eqref{eq_BBM}. For
$f\in W^{1,1}(\mathbb R^{n})$, we will prove that
%
%e5 #&#
\begin{equation}
\label{eq_limit}
\lim _{s\to 1^{-}} (1-s) \vol{\ofpp{s}f}^{- s/ n} = 2\, \vol{\opp f}^{-1/n},
\end{equation}
where $\opp f$ is the polar projection body of $f$ which was defined (with
different notation) in \cite{Zhang99} (also see \cite{LYZ2002b}) for
$\xi \in \snvtex $ by
%
%e6 #&#
\begin{equation}
\label{eq_polarprojection_functional}
\Vert \xi \Vert _{\opp f}= \frac {1}{2} \int _{\mathbb R^{n}} \vert
\langle \nabla f(x),\xi \rangle \vert \dvtex x.
\end{equation}
Moreover, we establish the corresponding statement for
$f\in BV(\mathbb R^{n})$ and also recover Wang's generalized Sobolev--Zhang
inequality (without characterization of the equality case) from Theorem~\ref{thm_afracsobo} (see Section~\ref{sec_limit1}).

We prove Theorem~\ref{thm_afracsobo} by first establishing affine fractional
P\'olya--Szeg\H o inequalities as a consequence of the Riesz rearrangement
inequality. For the equality case, we use anisotropic fractional Sobolev
norms (introduced in \cite{Ludwig:fracperi}) and the equality case of the
Riesz rearrangement inequality due to Burchard \cite{Burchard96}. For
$0<s<1$ and $E\subset \mathbb R^{n}$ of finite $s$-perimeter (see Section~\ref{sec_prelim}), we define the fractional polar projection body as
$\ofpp sE = \ofpp s\chi _{E}$, where $\chi _{E}$ is the indicator function
of $E$, and obtain from the affine fractional P\'olya--Szeg\H o inequalities
that
%
%e7 #&#
\begin{equation}
\label{eq_afracpetty}
\Big(\frac{\vol{\ofpp sE}}{\vol{\ofpp s\Bvtex}}\Big)^{-s/n} \geq \Big(
\frac{\vol{E}}{\vol{\Bvtex}}\Big)^{(n-s)/n}
\end{equation}
for $0<s<1$ and $E\subset \mathbb R^{n}$ of finite $s$-perimeter and finite
measure, where $\Bvtex $ is the Euclidean unit ball in $\mathbb R^{n}$. From
these fractional Petty projection inequalities, Theorem~\ref{thm_afracsobo} follows by a co-area formula for the fractional Sobolev
norm. In the limit as $s\to 1^{-}$, we obtain from~\eqref{eq_afracpetty} the generalized Petty projection inequality by Zhang
\cite{Zhang99} and Wang \cite{Tuo_Wang}. In addition, we prove versions
of our functional and geometric inequalities for Steiner symmetrization.

In Section~\ref{sec_GZ}, we connect fractional polar projection bodies
to radial mean bodies. For a convex body $E\subset \mathbb R^{n}$ (that
is, $E$ is a compact convex set with non-empty interior) and $p>-1$, Gardner
and Zhang \cite{GZ} defined the radial $p$-th mean body of $E$, by its
radial function for $\xi \in \snvtex $, as
%
%e1 #&#
\begin{equation}
\rho _{\omr p E}(\xi ) = \Big(\frac {1}{\vol{E}}\int _{E} \rho _{E-x}(
\xi )^{p} \dvtex x\Big)^{\frac{1}{p}},
\nonumber \end{equation}
where the limit of the above expression is taken for $p=0$. They proved
that
%
%e8 #&#
\begin{equation}
\label{eq_gardnerzhang}
\frac{\vol{\omr{p} \Svtex}}{\vol{\Svtex}} \le
\frac{\vol{\omr{p} E}}{\vol{ E}}
\end{equation}
for a convex body $E\subset \mathbb R^{n}$ and $-1< p<n $ with equality
precisely if $E$ is a simplex, and that the inequality is reversed for
$p>n$. Here, $\Svtex $ is any $n$-dimensional simplex in $\mathbb R^{n}$. We
show that
%
%e1 #&#
\begin{equation}
\ofpp{s} E =\Big(\frac {s}{2\vol{E}}\Big)^{\frac{1}{s}} \omr {-s} E
\nonumber \end{equation}
for every convex body $E\subset \mathbb R^{n}$ and $0<s<1$. Hence,~\eqref{eq_gardnerzhang} can be rewritten as the following reverse of inequality~\eqref{eq_afracpetty}:
%
%e1 #&#
\begin{equation}
%!LEAP WARNING:  script commented \label{eq_fraczhang}
\Big(\frac{\vol{\ofpp{s} E}}{\vol{\ofpp{s} \Svtex}}\Big)^{- s/ n} \le
\Big(\frac{\vol{ E}}{\vol{\Svtex}}\Big)^{ {(n-s)}/n}
\nonumber \end{equation}
for every convex body $E\subset \mathbb R^{n}$ and $0<s<1$. Moreover, using~\eqref{eq_afracpetty} and the Riesz rearrangement inequality, we show in
Theorem~\ref{thm_meanradial} that
%
%e1 #&#
\begin{equation}
\frac{\vol{\omr{p} E}}{\vol{ E}} \le \displaystyle
\frac{\vol{\omr{p} \Bvtex}}{\vol{\Bvtex}}
\nonumber \end{equation}
for a convex body $E\subset \mathbb R^{n}$ and $-1<p<n$ with equality precisely
if $E$ is an ellipsoid, and that the inequality is reversed for
$p>n$. Thereby, we establish a reverse of the Gardner--Zhang inequality~\eqref{eq_gardnerzhang} and prove a natural conjecture in the field.

\subsection*{Acknowledgments}

The authors thank the reviewers for their careful reading and their very
helpful remarks. J.~Haddad was supported by Grant RYC2021-031572-I, funded
by the Ministry of Science and Innovation/State Research Agency/10.13039/501100011033
and by the E.U. NextGeneration EU/Recovery, Transformation and Resilience
Plan, and partly by PAIDI2020, P20 00664 and by FAPERJ (JCNE 236508). M.~Ludwig
was supported, in part, by the Austrian Science Fund (FWF) 10.55776/P34446.

%s2 #&#
\section{Preliminaries}
\label{sec_prelim}

%s2.1 #&#
\subsection{Function spaces}

For $p\ge 1$ and measurable $f:\mathbb R^{n}\to \mathbb R$, let
\begin{equation*}
\Vert f\Vert _{p} =\Big(\int _{\mathbb R^{n}} \vert f(x)\vert ^{p}\dvtex x
\Big)^{1/p}.
\end{equation*}
We set $\{f\ge t\}=\{ x\in \mathbb R^{n}: f(x)\ge t\}$ for
$t\in \mathbb R$ and use corresponding notation for level sets, etc. We
say that $f$ is non-zero if $\{f\ne 0\}$ has positive measure, and we identify
functions that are equal up to a set of measure zero. For $p\ge 1$, let
\begin{equation*}
L^{p}(\mathbb R^{n}) = \Big\{f: \mathbb R^{n}\to \mathbb R: f
\text{ is measurable}, \,\Vert f\Vert _{p} < \infty \Big\}.
\end{equation*}
Here and below, when we use measurability and related notions, we refer
to the $n$-dimensional Lebesgue measure on $\mathbb R^{n}$.

For $E\subset \mathbb R^{n}$, the indicator function $\chi _{E}$ is defined
by $\chi _{E}(x)=1$ for $x\in E$ and $\chi _{E}(x)=0$ otherwise. We say
that two measurable sets are equivalent if their indicator functions are
equal up to a set of measure zero.

Let $0<s<1$. We define the $s$-fractional Sobolev space as
\begin{equation*}
W^{s,1}(\mathbb R^{n}) = \Big\{f \in L^{1}(\mathbb R^{n}) : \int _{
\mathbb R^{n}}\int _{\mathbb R^{n}} \frac{|f(x)-f(y)|}{|x-y|^{n+s}}
\dvtex x \dvtex y < \infty \Big\}.
\end{equation*}
For $f\in L^{1}(\mathbb R^{n})$, we say that $f$ admits a weak gradient
if there exists a vector field
$\nabla f:\mathbb R^{n} \to \mathbb R^{n}$ with measurable components such
that
\begin{equation*}
\int _{\mathbb R^{n}} \langle \nu (x), \nabla f(x) \rangle \dvtex x= -
\int _{\mathbb R^{n}} f(x)\div \nu (x) \, \dvtex x
\end{equation*}
for every $\nu \in C^{\infty}_{c}(\mathbb R^{n};\mathbb R^{n})$, where
$C_{c}^{\infty}(\mathbb R^{n};\mathbb R^{n})$ is the set of smooth vector
fields $\nu : \mathbb R^{n}\to \mathbb R^{n}$ with compact support and
$\div \nu $ denotes the divergence of $\nu $. We set
\begin{equation*}
W^{1,1}(\mathbb R^{n}) = \big\{f \in L^{1}(\mathbb R^{n}) : \vert
\nabla f \vert \in L^{1}(\mathbb R^{n})\big\}.
\end{equation*}
Let $\borel $ denote the class of Borel sets in $\mathbb R^{n}$. For
$f\in L^{1}(\mathbb R^{n})$, we say that $f$ is a function of bounded variation
on $\mathbb R^{n}$ if there is a finite vector-valued Radon measure
$\Dvtex f: \borel \to \mathbb R^{n}$ such that
%
%e9 #&#
\begin{equation}
\label{eq_bv}
\int _{\mathbb R^{n}} \langle \nu (x), \sigma _{f}(x) \rangle \dvtex
\vert \Dvtex f\vert (x) = - \int _{\mathbb R^{n}} f(x)\div \nu (x) \, \dvtex x
\end{equation}
for every $\nu \in C_{c}^{\infty}(\mathbb R^{n}; \mathbb R^{n})$, where
$\vert \Dvtex f\vert : \borel \to [0,\infty )$ denotes the variation measure
of $\Dvtex f$ and $\sigma _{f}: \mathbb R^{n} \to \snvtex $ the Radon--Nikodym
derivative of $\Dvtex f$ with respect to $\vert \Dvtex f\vert $. We write
$BV(\mathbb R^{n})$ for the space of $L^{1}$ functions of bounded variation.
For more information on functions of bounded variation, we refer to
\cite[Chapter~5]{Evans:Gariepy}.

%s2.2 #&#
\subsection{Symmetrization}

Let $E \subseteq \mathbb R^{n}$ be a Borel set of finite measure. The Schwarz
symmetral of $E$, denoted by $E^{\star}$, is the centered Euclidean ball
with the same volume as $E$.

For a non-negative integrable function $f$ and $t>0$, the superlevel set
$\{f \geq t\}$ is a measurable set of finite measure. The layer cake formula
states that
\begin{equation*}
f(x) = \int _{0}^{\infty }\chi _{\{f\geq t\}}(x) \dvtex t
\end{equation*}
for almost every $x \in \mathbb R^{n}$ and allows us to recover the function
from its superlevel sets. The Schwarz symmetral of $f$, denoted by
$f^{\star}$, is defined by
\begin{equation*}
f^{\star}(x) = \int _{0}^{\infty }\chi _{\{f\geq t\}^{\star}}(x) \dvtex t
\end{equation*}
for $x\in \mathbb R^{n}$. Hence, $f^{\star}$ is determined by the properties
of being radially symmetric and having superlevel sets that are balls of
the same measure as the superlevel sets of $f$. Note that
$f^{\star}$ is often called symmetric decreasing rearrangement of
$f$.

Let $\xi \in \snvtex $. The Steiner symmetral of $E$ in the direction
$\xi $, denoted by $E^{\xi}$, is defined by the property that for every
line $L$ parallel to $\xi $, the set $L \cap E^{\xi}$ is an interval of
the same length as $L \cap E$ and symmetric with respect to the subspace
orthogonal to $\xi $.

For a non-negative integrable function $f$, the Steiner symmetral
$f^{\xi}$ is defined by
\begin{equation*}
f^{\xi}(x) = \int _{0}^{\infty }\chi _{\{f\geq t\}^{\xi}}(x) \dvtex t
\end{equation*}
for $x\in \mathbb R^{n}$.

%s2.3 #&#
\subsection{Star bodies}

For a set $K \subseteq \mathbb R^{n}$ that is star-shaped (with respect
to the origin), the gauge function
$\|\cdot \|_{K} : \mathbb R^{n} \to [0,\infty ]$ is defined as
\begin{equation*}
\|x\|_{K} = \inf \{ \lambda > 0 : x \in \lambda K\},
\end{equation*}
and the radial function
$\rho _{K}:\mathbb R^{n} \setminus \{0\} \to [0,\infty ]$ as
\begin{equation*}
\rho _{K}(x) = \|x\|_{K}^{-1} = \sup \{\lambda \geq 0:\lambda x \in K
\}.
\end{equation*}
We call $K$ a star body if its radial function is strictly positive and
continuous in $\mathbb R^{n} \setminus \{0\}$.

We say that a star-shaped set is $p$-convex with $0< p \leq 1$ if
\begin{equation*}
\|x+y\|_{K}^{p} \leq \|x\|_{K}^{p} + \|y\|_{K}^{p}
\end{equation*}
for all $x,y \in \mathbb R^{n}$. A $p$-convex body is a star body that
is also $p$-convex. For more information on $p$-convex sets, see, for example,
\cite{KPR}.

For star bodies $K,L \subseteq \mathbb R^{n}$ and $p \in \mathbb R$, define
the dual mixed volume as
\begin{equation*}
\tilde V_{p}(K,L) = \frac {1}{n} \int _{\snvtex} \rho _{K}(\xi )^{n-p}
\rho _{L}(\xi )^{p} \dvtex \xi .
\end{equation*}
Note that
\begin{equation*}
\tilde V_{p}(K,K) = \vol{K}.
\end{equation*}
The dual mixed volume inequality (see
\cite[Section~9.3]{Schneider:CB2} or \cite[B.29]{Gardner}) states that
for $p<0$ or $p>n$,
%
%e10 #&#
\begin{equation}
\label{eq_mixedvolume}
\tilde V_{p}(K,L) \geq \vol{K}^{({n-p})/n} \vol{L}^{ p/n},
\end{equation}
and the reverse inequality holds for $0<p<n$. Equality holds for
$p \neq 0,n$ if and only if $K$ and $L$ are dilates, where we say that
star bodies $K$ and $L$ are dilates if $\rho _{K}=c\,\rho _{L}$ on
$\snvtex $ for some $c>0$.

For star bodies $K,L \subseteq \mathbb R^{n}$, define
%
%e11 #&#
\begin{equation}
\label{eq_dVlog}
\dVlog (K,L) = \frac {1}{n\vol{K}} \int _{\snvtex} \rho _{K}(\xi )^{n}
\log \Big(\frac{\rho _{L}(\xi )}{\rho _{K}(\xi )}\Big) \dvtex \xi .
\end{equation}
Note that
\begin{equation*}
\dVlog (K,L) = \lim _{p\to 0} \log \Big(
\frac{\tilde V_{p}(K,L)}{\vol{K}}\Big)^{1/p}.
\end{equation*}
By~\eqref{eq_mixedvolume}, this implies that
%
%e12 #&#
\begin{equation}
\label{eq_mixedvolume_log}
\dVlog (K,L) \leq \frac {1}{n}\log \Big(\frac{\vol{L}}{\vol{K}}\Big)
\end{equation}
for star bodies $K,L \subset \mathbb R^{n}$ (cf.\ \cite{GardnerHugWeilYe}).

%s2.4 #&#
\subsection{Convex bodies and the Petty projection inequality}

A set $K\subseteq \mathbb R^{n}$ is called a (proper) convex body if it
is compact and convex and has non-empty interior. For a convex body
$K$, the support function $h_{K}:\mathbb R^{n}\to \mathbb R$ is defined
as
\begin{equation*}
h_{K}(y)= \max \{\langle x,y\rangle : x\in K\}.
\end{equation*}
A convex body $K$ that contains the origin in its interior is also a star
body. The polar body of such $K$ is defined as
\begin{equation*}
K^{*}=\{y\in \mathbb R^{n}: \langle x,y\rangle \le 1 \text{ for all } x
\in K\}
\end{equation*}
and is a convex body containing the origin in its interior, while
\begin{equation*}
\Vert x\Vert _{K} = h_{K^{*}}(x)
\end{equation*}
for $x\in \mathbb R^{n}$.

For $K\subset \mathbb R^{n}$ a convex body, the projection body is defined
as the convex body whose support function for $\xi \in \snvtex $ is
%
%e2.4 #&#
\begin{equation}
h_{\opvtex K}(\xi )= \vol{\proj _{\xi ^{\perp}} (K)}_{n-1},
\nonumber \end{equation}
where $\proj _{\xi ^{\perp}}$ denotes the orthogonal projection to the
hyperplane, $\xi ^{\perp}$, orthogonal to $\xi $ and
$\vol{\cdot}_{n-1}$ stands for $(n-1)$-dimensional volume in
$\xi ^{\perp}$. Using the surface area measure $S(K, \cdot )$ of $K$, we
can also write
%
%e2.4 #&#
\begin{equation}
%!LEAP WARNING:  script commented \label{eq_defop}
h_{\opvtex K}(\xi )= \frac {1}{2} \int _{\snvtex} \vert \langle \xi ,\eta
\rangle \vert \dvtex S(K,\eta )
\nonumber \end{equation}
for $\xi \in \snvtex $. Projection bodies were introduced by Minkowski, and
$h_{\opvtex K} (\xi )$ is called the brightness of $K$ in the direction
$\xi $ (see \cite{Gardner,Schneider:CB2} for the definition of surface
area measure and more information on projection bodies).

\eject

The important Petty projection inequality \cite{Petty} states that for
a convex body $K\subset \mathbb R^{n}$,
%
%e13 #&#
\begin{equation}
\label{eq_petty_convex}
\vol{ K}^{(n-1)/n}\le \frac{\omega _{n}}{\omega _{n-1}} \vol{\opp K}^{-
1/ n}
\end{equation}
with equality precisely for ellipsoids, where $\opp K$ is the polar body
of $\opvtex K$. A new proof using Theorem~\ref{thm_afracsobo} (and hence the
Riesz rearrangement inequality) is given in Section~\ref{sec_fracpetty}. A previous proof of~\eqref{eq_petty_convex} that also
uses the Riesz rearrangement inequality and the so-called convolution square
of a convex body was given by Schmuckenschl\"ager
\cite{Schmuckenschlager95}. For generalizations of projection bodies of
convex bodies, see, for example,
\cite{Boeroeczky2013,CLYZ2009,LRZ,Haberl:Schuster1,LYZ2000,LYZ2010b,Ludwig:Minkowski},
and for further results related to the Sobolev--Zhang inequality, see,
for example, \cite{HJM2016,LYZ2006,LYZ2002b,Nguyen16,Tuo_Wang:PSP}.

%s2.5 #&#
\subsection{Anisotropic fractional Sobolev norms and perimeters}

Let $0<s<1$ and let $K\subset \mathbb R^{n}$ be a star body. For
$f \in W^{s,1}(\mathbb R^{n})$, the anisotropic $s$-fractional Sobolev
norm of $f$ with respect to $K$ is
\begin{equation*}
\int _{\mathbb R^{n}}\int _{\mathbb R^{n}}
\frac{\vert f(x)-f(y)\vert}{\Vert x-y\Vert _{K}^{n+s}} \dvtex x\dvtex y.
\end{equation*}
It was defined in \cite{Ludwig:fracperi} for $K$ a convex body (also, see
\cite{Ludwig:fracnorm}). For a measurable set
$E \subseteq \mathbb R^{n}$, the anisotropic $s$-fractional perimeter
with respect to a convex body $K\subset \mathbb R^{n}$ was introduced in
\cite{Ludwig:fracperi} as
\begin{equation*}
P_{s}(E,K)= \int _{E} \int _{E^{c}}
\frac{1}{\Vert x-y\Vert _{K}^{n+s}} \dvtex x \dvtex y,
\end{equation*}
where $E^{c}= \mathbb R^{n} \backslash E$ is the complement of $E$ in
$\mathbb R^{n}$. The same definition will be used for a star body
$K\subset \mathbb R^{n}$. The anisotropic $s$-fractional perimeter of
$E$ with respect to $K$ is half of the anisotropic $s$-fractional Sobolev
norm of $\chi _{E}$ with respect to $K$. For a star body $K$ and measurable
$E\subset \mathbb R^{n}$, this implies that $P_{s}(E,K)<\infty $ if and
only if $\chi _{E}\in W^{s,1}(\mathbb R^{n})$. If $K$ is the Euclidean
unit ball, we write $P_{s}(E)$ and obtain the well-known Euclidean
$s$-fractional perimeter of $E$. We will use that
%
%e14 #&#
\begin{equation}
\label{eq_psb}
P_{s}(\Bvtex )= \frac{2^{1-s} \pi ^{(n-1)/2}n\omega _{n}}{s(n-s)}
\frac{\Gamma \left (\frac{1-s}{2}\right )}{\Gamma \left (\frac{n-s}{2}\right )}
=\frac{2^{1-s} n\omega _{n}\omega _{n-s}}{s(1-s)\omega _{1-s}}
\end{equation}
where $\omega _{q}=\pi ^{q/2}/\Gamma (q/2+1)$ for $q>0$ and
$\Gamma $ is the gamma function. This can be obtained from the results
in \cite{FFMMM} as described in \cite{Garofalo20}.

A simple way to obtain~\eqref{eq_psb} is to use the Blaschke--Petkantschin
formula \cite[Theorem 7.2.7]{SchneiderWeil}:
%
%e2.5 #&#
\begin{align*}
%!LEAP WARNING:  script commented \label{eq_BP}
%
P_{s}&(\Bvtex )
\\
&= \int _{\Bvtex \cap L\ne \emptyset} \int _{\Bvtex \cap L} \int _{(\mathbb R^{n}
\backslash \Bvtex )\cap L}
\frac{\dvtex H^{1}(x)\dvtex H^{1}(y)}{\vert x-y\vert ^{1+s}} \dvtex L
\\
&=\frac{1}{2}\int _{\snvtex} \int _{\xi ^{\perp}} \int _{\Bvtex \cap (z+
\mathbb R\xi )}\int _{(\mathbb R^{n}\backslash \Bvtex )\cap (z+\mathbb R
\xi )} \frac{\dvtex H^{1}(x)\dvtex H^{1}(y) }{\vert x-y\vert ^{1+s}}\dvtex z \dvtex
\xi
\\
&=\frac{n\omega _{n} (n-1)\omega _{n-1}}{2} \int _{0}^{1} \int _{
\vert x\vert \le \sqrt{1-t^{2}}}\int _{\vert y\vert \le \sqrt{1-t^{2}}}
\frac{\dvtex H^{1}(x)\dvtex H^{1}(y)}{\vert x-y\vert ^{1+s}}\, t^{n-2}\dvtex t
\\
&=\frac{n\omega _{n} (n-1)\omega _{n-1}}{2} \int _{0}^{1}(1-t^{2})^{
\frac{1-s}{2}} t^{n-2}\dvtex t \int _{\vert x\vert \le 1}\int _{\vert y
\vert \le 1} \!\!\frac{\dvtex H^{1}(x)\dvtex H^{1}(y)}{\vert x-y\vert ^{1+s}}
\\
&=\frac{(n-1)n \omega _{n-1}\omega _{n}}{2^{s}s(1-s)}\, {\rm B}\biggl(\frac{1-s}{2}+1,\frac{n-1}{2}\biggr)
\end{align*}
where the outer integration in the first integral is on the affine Grassmannian
of lines in $\mathbb R^{n}$, and we use the $(n-1)$-dimensional Hausdorff
measure for the direction of lines and the $(n-1)$-dimensional Lebesgue
measure in the orthogonal complement of a line. Here $H^{1}$ denotes the
$1$-dimensional Hausdorff measure, and $\mathbb R\xi $ is the line with
direction $\xi $ while $\rm B$ is the beta function defined by
${\rm B}(a,b) = \int _{0}^{1} t^{a-1}(1-t)^{b-1} dt$. For the optimal constant
in~\eqref{eq_fracsobo}, we get
%
%e15 #&#
\begin{equation}
\label{eq_alpha}
\alpha _{n,s}= \frac{\omega _{n}^{(n-s)/n}}{2 P_{s}(\Bvtex )}
\end{equation}
since there is equality in~\eqref{eq_fracsobo} for indicator functions
of balls.

The following fractional co-area formula is a consequence of Fubini's theorem
(cf.\ \cite[Example~(2.9)]{Visintin90}):
%
%e2.5 #&#
%e16 #&#
%e2.5 #&#
\begin{align}
\,\,\int _{\mathbb R^{n}}\int _{\mathbb R^{n}} &
\frac{\vert f(x)-f(y)\vert}{\Vert x-y\Vert _{K}^{n+s}} \dvtex x\dvtex y
\nonumber \\
&=\int _{\mathbb R^{n}}\int _{\mathbb R^{n}} \int _{0}^{\infty }
\frac{\vert \chi _{\{f\ge t\}}(x)-\chi _{\{f\ge t\}}(y)\vert}{\Vert x-y\Vert _{K}^{n+s}}
\dvtex t \dvtex x\dvtex y
\label{eq_coarea}
\\
&= 2 \int _{0}^{\infty }P_{s}(\{ f\ge t\},K)\dvtex t
\nonumber \end{align}
for non-negative $f\in W^{s,1}(\mathbb R^{n})$ and a star body
$K\subset \mathbb R^{n}$.

%s3 #&#
\section{Fractional polar projection bodies}
\label{sec_fpp}

Let $0<s<1$ and $f\in BV(\mathbb R^{n})$. We discuss properties of the
$s$-fractional polar projection body $\ofpp{s} f$, that is, the star-shaped
set defined through its gauge function for $\xi \in \mathbb R^{n}$ as
%
%e17 #&#
\begin{equation}
\label{eq_defpp2}
\Vert{\xi}\Vert _{\ofpp{s}f}^{s}= \int _{0}^{\infty }t^{-s}\, \int _{
\mathbb R^{n}}\Big\vert \frac{f(x +t\xi )-f(x)}{t}\Big\vert \dvtex x\dvtex t.
\end{equation}
Notice that $\|\cdot \|_{\ofpp sf}$ is a $1$-homogeneous function.

\eject

We start with a simple observation. Let $K\subset \mathbb R^{n}$ be a star
body. Using~\eqref{eq_defpp2}, we obtain
%
%e3 #&#
%e3 #&#
%e18 #&#
%e3 #&#
%e3 #&#
\begin{align}
\int _{\mathbb R^{n}}\int _{\mathbb R^{n}} &
\frac{|f(x)-f(y)|}{\|x-y\|_{K}^{n+s}} \dvtex x \dvtex y
\nonumber \\
&= \int _{\mathbb R^{n}}\int _{\mathbb R^{n}}
\frac{|f(y+z)-f(y)|}{\|z\|_{K}^{n+s}} \dvtex z \dvtex y
\nonumber \\
&= \int _{\snvtex} \int _{0}^{\infty }{\|t \xi \|_{K}^{-n-s}} \int _{
\mathbb R^{n}} {|f(y+t\xi )-f(y)|}\,t^{n-1} \dvtex y \dvtex t \dvtex \xi
\label{eq_frac_func}
\\
&= \int _{\snvtex}{\| \xi \|_{K}^{-n-s}} \int _{0}^{\infty }t^{-s} \Big\|
\frac{f(\cdot +t\xi ) - f(\cdot )}{t}\Big\|_{1} \dvtex t \dvtex \xi
\nonumber \\
&= \int _{\snvtex} \rho _{K}(\xi )^{n+s} \rho _{\ofpp{s} f}(\xi )^{-s}
\dvtex \xi .
\nonumber \end{align}
We will use the following result to rewrite this in a more convenient way.

%p2 #&#
\begin{proposition}
\label{prop_Mconvexbody}
For non-zero $f \in W^{s,1}(\mathbb R^{n})$, the set $\,\ofpp{s} f$ is
an origin-symmetric $s$-convex body with the origin in its interior. Moreover,
there is $c>0$ depending only on $f$ such that
$\ofpp{s} f \subseteq c (1-s)^{1/s} \, \Bvtex $ for every $s \in (0,1)$.
\end{proposition}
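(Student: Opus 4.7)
The plan is to work with the function $N(\xi) := \|\xi\|_{\ofpp sf}^s$ and verify the defining properties of the gauge function in turn. First I would establish the algebraic properties: $N(-\xi) = N(\xi)$ (via the substitution $x \mapsto x - t\xi$ in \eqref{eq_defpp2}) and $N(\lambda\xi) = \lambda^s N(\xi)$ for $\lambda > 0$ (via $t \mapsto t/\lambda$), so that $\|\cdot\|_{\ofpp sf}$ is symmetric and $1$-homogeneous. Subadditivity $N(\xi_1+\xi_2) \le N(\xi_1) + N(\xi_2)$ comes from the pointwise triangle inequality $|f(x+t\xi_1+t\xi_2)-f(x)| \le |f(x+t\xi_1+t\xi_2)-f(x+t\xi_1)| + |f(x+t\xi_1)-f(x)|$ followed by the translation $x \mapsto x - t\xi_1$ in the first summand; this is exactly the $s$-convexity condition on $\ofpp sf$.

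Next I would establish that $N$ is finite everywhere and uniformly bounded on $\sn$. Formula \eqref{eq_frac_func} with $K = \B$ gives $\int_\sn N(\xi)\,\d\xi = [f]_{W^{s,1}} < \infty$. To upgrade this to a pointwise bound on the sphere, I would integrate $N(\xi) \le N(\xi - \eta) + N(\eta)$ over $\eta \in \sn$. Writing $N(\xi - \eta) = |\xi - \eta|^s N(\zeta(\eta))$ with $\zeta(\eta) := (\xi - \eta)/|\xi - \eta|$ and parametrizing by the angles $\theta := \arccos(\xi\cdot\eta)$ and $\phi := \arccos(\xi\cdot\zeta) = \pi/2 - \theta/2$, a direct calculation in angular coordinates gives the spherical Jacobian $\d\sigma_\eta = 2^{n-1}(\zeta\cdot\xi)^{n-2}\,\d\sigma_\zeta$ on the open hemisphere $\{\zeta:\zeta\cdot\xi>0\}$, together with $|\xi-\eta| = 2(\zeta\cdot\xi)$. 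Since $(\zeta\cdot\xi)^{n+s-2} \le 1$ for $n \ge 2$, this yields $\int_\sn N(\xi-\eta)\,\d\eta \le 2^{n+s-1}[f]_{W^{s,1}}$ and hence the uniform bound $N(\xi) \le (1 + 2^{n+s-1})[f]_{W^{s,1}}/(n\omega_n)$ on $\sn$; $s$-homogeneity then extends finiteness to all of $\R^n \setminus \{0\}$. This is the main obstacle in the proof, because integrability of $N$ on $\sn$ (which comes for free) is strictly weaker than a pointwise bound, and the essential idea is to combine subadditivity with this geometric change of variables on the sphere.

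For positivity, if $N(\xi_0) = 0$ for some $\xi_0 \ne 0$, then $T(t\xi_0) := \|f(\cdot + t\xi_0) - f\|_1 = 0$ for a.e.\ $t > 0$ and, by continuity of translation in $L^1$, for every $t > 0$; yet for non-zero $f \in L^1(\R^n)$, approximation by compactly supported functions gives $T(v) \to 2\|f\|_1 > 0$ as $|v| \to \infty$, a contradiction. Continuity of $N$ on $\R^n \setminus \{0\}$ follows from $|N(\xi) - N(\eta)| \le N(\xi - \eta) \le |\xi - \eta|^s \sup_{\sn} N$ combined with the uniform bound above, completing the proof that $\ofpp sf$ is an origin-symmetric $s$-convex body with the origin in its interior. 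For the moreover part, I would choose $g$ compactly supported with $\|f - g\|_1 \le \|f\|_1/4$, say with support inside $R_1\B$; for $t \ge R_0 := 2R_1$ and any $\xi \in \sn$, the supports of $g(\cdot + t\xi)$ and $g$ are disjoint, which yields $T(t\xi) \ge \|f\|_1$ uniformly in $\xi$ by the triangle inequality. Then $N(\xi) \ge \|f\|_1 R_0^{-s}/s$, so $\|\xi\|_{\ofpp sf} \ge (\|f\|_1/s)^{1/s}/R_0$; since $s \mapsto (\|f\|_1/s)^{1/s}$ has a positive minimum on $(0,1)$ depending only on $\|f\|_1$, a constant $c$ depending only on $f$ can be chosen so that $\ofpp sf \subseteq c\B$ for every $s \in (0,1)$.
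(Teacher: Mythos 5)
Your proposal is correct and follows the same broad outline as the paper (symmetry and homogeneity by change of variables; subadditivity of $N=\Vert\cdot\Vert_{\ofpp sf}^s$ from the triangle inequality, which is $s$-convexity; then finiteness, positivity and continuity of the gauge; finally the $s$-uniform lower bound on $N$ by splitting off a compactly supported piece of $f$). It differs from the paper on the one step you flag as the main obstacle — upgrading $\int_{\sn}N\,\mathrm{d}\xi<\infty$ to a pointwise bound on $\sn$ — and it is also more careful on the $s$-uniformity in the ``moreover'' part.

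For the finiteness step, the paper takes a qualitative route: from $\int_{\sn}N\,\mathrm d\xi<\infty$, pick $r$ large so that $A=\{\xi\in\sn:N(\xi)<r\}$ has positive measure, observe that any positive-measure subset of $\sn$ contains a basis $\{\xi_1,\ldots,\xi_n\}$ of $\R^n$, and then use subadditivity along this basis to get $\Vert x\Vert_{\ofpp sf}\le d|x|$. Your route is to average the subadditivity inequality $N(\xi)\le N(\xi-\eta)+N(\eta)$ over $\eta\in\sn$ and control $\int_{\sn}N(\xi-\eta)\,\mathrm d\eta$ via the substitution $\zeta=(\xi-\eta)/|\xi-\eta|$ with Jacobian $2^{n-1}(\zeta\cdot\xi)^{n-2}$ and the identity $|\xi-\eta|=2(\zeta\cdot\xi)$; I checked these and they are correct, giving the explicit bound $N(\xi)\le (1+2^{n+s-1})\,\frac{1}{n\omega_n}\int_{\sn}N\,\mathrm d\xi$ for $n\ge 2$ (the case $n=1$ is trivial since $\sn$ is a two-point set). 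Your approach buys an explicit uniform constant with no selection of a basis, at the cost of a non-elementary change of variables on $\sn$; the paper's argument is more elementary but gives a constant depending on the chosen $r$ and basis. Both are valid.

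For the ``moreover'' bound, your version is the cleaner one: you keep the factor $R_0^{-s}/s$ from $\int_{R_0}^\infty t^{-s-1}\,\mathrm d t$ and then use that $s\mapsto(\Vert f\Vert_1/s)^{1/s}$ has a positive infimum on $(0,1)$; this is exactly what makes the bound uniform in $s\in(0,1)$. The paper replaces $\int_r^\infty t^{-s-1}\,\mathrm d t$ by the weaker $1/r$, discarding the $1/s$; the resulting bound $\Vert\xi\Vert_{\ofpp sf}\ge(2\Vert f\Vert_1/(3r))^{1/s}$ is $s$-uniform only when $2\Vert f\Vert_1/(3r)\ge1$, so your more careful estimate is actually needed as written (the paper's strategy is of course fixable by the same observation). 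You also correctly use $R_0=2R_1$ for the disjoint-supports step. The qualitative ``positivity'' argument you add (via continuity of translations and $T(v)\to2\Vert f\Vert_1$) is redundant given the quantitative bound, but it is correct and harmless.
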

\begin{proof}
First, note that since for $\xi \in \snvtex $ and $t>0$,
\begin{equation*}
\int _{\mathbb R^{n}}\vert {f(x -t\xi )-f(x)}\vert \dvtex x=\int _{
\mathbb R^{n}}\vert f(x)-f(x+t\xi )\vert \dvtex x,
\end{equation*}
the set $\ofpp{s} f$ is origin-symmetric.

Next, we show that $\rho _{\ofpp{s} f}$ is bounded by
$c (1-s)^{1/s} $ on $\snvtex $ for some $c>0$ depending only on $f$. We take
$r>1$ large enough so that
$\int _{r \Bvtex} \vert f(x)\vert \dvtex x \geq \frac {2}{3} \|f\|_{1}$. For given
$t \in (0,1)$, let $k>0$ be an integer such that
$2r \leq k t \leq 2r+t$. For $\xi \in \snvtex $, using changes of variables,
the triangle inequality, and the fact that $r\Bvtex $ is disjoint from
$r\Bvtex -kt\xi $, we obtain
%
%e3 #&#
%e3 #&#
%e3 #&#
%e3 #&#
%e3 #&#
%e3 #&#
%e3 #&#
\begin{align*}
\int _{\mathbb R^{n}}& |f(x+t\xi ) - f(x)| \dvtex x
\\
&= \frac {1}{k} \sum _{i=1}^{k} \int _{\mathbb R^{n}} |f(x+ i t\xi ) -
f(x+ (i-1)t \xi )| \dvtex x
\\
&\geq \frac {1}{k} \int _{\mathbb R^{n}} |f(x+ k t\xi ) - f(x)| \dvtex x
\displaybreak\\
&\geq \frac {1}{k} \Big( \int _{r \Bvtex} (|f(x)| - |f(x+kt\xi )|) \dvtex x
\\
&
\hskip 52pt
+ \int _{r \Bvtex -kt\xi}( |f(x+kt\xi )| - |f(x)|) \dvtex x \Big)
\\
&\geq \frac {t}{2r+t} \Big( 2 \int _{r \Bvtex} |f(x)| \dvtex x - 2\int _{(r
\Bvtex )^{c}} |f(x)| \dvtex x \Big)
\\
&\geq \frac {2t}{2r+1} \biggl( \frac {2}{3} \|f\|_{1} - \frac {1}{3} \|f\|_{1}
\biggr).
\end{align*}
Hence,
%
%e3 #&#
\begin{align}
%!LEAP WARNING:  script commented \label{eq_lower_bound_norm}
\int _{0}^{\infty }t^{-s} \left \|\frac{f(\cdot +t\xi )-f(\cdot )}{t}
\right \|_{1} \dvtex t
&\geq \frac {2\|f\|_{1}}{3(2r+1)} \int _{0}^{1} t^{-s} \dvtex t
\nonumber \\
&\geq \frac {2\|f\|_{1}}{3(2r+1)} \frac {1}{1-s},
\nonumber
\end{align}
which implies that $\ofpp{s} f \subseteq c(1-s)^{1/s}\, \Bvtex $ for a constant
$c>0$ depending only on $f$.

Next, we show that $\Vert \cdot \Vert _{\ofpp{s}f}^{s}$ is sublinear on
$\mathbb R^{n}$. Indeed, for $\xi , \eta \in \mathbb R^{n}$, the triangle
inequality and a change of variables show that
%
%e19 #&#
\begin{equation}
\begin{split} \Vert \xi &+\eta \Vert _{\ofpp{s} f}^{s}
\\
&= \int \limits _{0}^{\infty }t^{-s-1} \|f(\cdot +t\xi +t\eta )-f(
\cdot )\|_{1} \dvtex t
\\
&\leq \int \limits _{0}^{\infty }t^{-s-1} ( \|f(\cdot +t\xi +t\eta )-f(
\cdot +t\xi )\|_{1} + \|f(\cdot +t\xi )-f(\cdot )\|_{1} ) \dvtex t
\label{eq_sublinearity}
\\
&= \|\xi \|_{\ofpp{s} f}^{s}+ \|\eta \|_{\ofpp{s} f}^{s},
\end{split}
\end{equation}
which shows that $\ofpp{s} f$ is an $s$-convex set.

Now, we show that $\ofpp{s} f$ has the origin in its interior. Using the
relation~\eqref{eq_frac_func} with $K=\Bvtex $, we get
%
%e3 #&#
\begin{equation}
%!LEAP WARNING:  script commented \label{eq_finite}
\int _{\snvtex} \Vert \xi \Vert _{\ofpp{s}f}^{s} \dvtex \xi = \frac {1}{n}
\int _{\mathbb R^{n}}\int _{\mathbb R^{n}}
\frac{|f(x)-f(y)|}{|x-y|^{n+s}} \dvtex x \dvtex y,
\nonumber \end{equation}
which is finite since $f\in W^{s,1}(\mathbb R^{n})$. We choose $r>0$ large
enough so that the set
$A = \{\xi \in \snvtex : \Vert \xi \Vert _{\ofpp{s}f}^{s} < r\}$ has positive
$(n-1)$-dimensional Hausdorff measure and a basis
$\{\xi _{1}, \ldots , \xi _{n}\} \subseteq A$ of $\mathbb R^{n}$. Writing
$x \in \mathbb R^{n}$ as $x = \sum _{i=1}^{n} x_{i} \xi _{i}$ and using~\eqref{eq_sublinearity}, we get
%
%e20 #&#
\begin{equation}
\label{eq_bounded}
\begin{split} \|x\|_{\ofpp sf} &\leq \Big( \sum _{i=1}^{n} |x_{i}|^{s}
\|\xi _{i}\|_{\ofpp sf}^{s} \Big)^{1/s}
\\
&\leq r^{1/s} \Big( \sum _{i=1}^{n} |x_{i}|^{s} \Big)^{1/s}
\\
&\leq r^{1/s} n^{1/s-1/2}\, \Big( \sum _{i=1}^{n} |x_{i}|^{2} \Big)^{1/2}
\\
&\leq r^{1/s} n^{1/s-1/2} \|\Xi ^{-1}\| \, |x|,
\end{split}
\end{equation}
where $\|\Xi ^{-1}\|$ is the operator norm of the inverse of the matrix
with columns $\xi _{i}$, $i=1, \ldots , n$. This shows that
$\ofpp sf$ has the origin as interior point.

To see that the gauge function is continuous combine~\eqref{eq_sublinearity} and~\eqref{eq_bounded} to obtain
\begin{equation*}
\|\xi +\eta \|_{\ofpp{s} f}^{s} \leq \|\xi \|_{\ofpp{s} f}^{s} + \|
\eta \|_{\ofpp{s} f}^{s} \leq \|\xi \|_{\ofpp{s} f}^{s} + d\, |\eta |^{s},
\end{equation*}
where $d>0$ is independent of $\xi $ and $\eta $. Applying~\eqref{eq_sublinearity} to the vectors $\xi +\eta $ and $-\eta $, we get
\begin{equation*}
\|\xi \|_{\ofpp{s} f}^{s} \leq \|\xi +\eta \|_{\ofpp{s} f}^{s} + \|-
\eta \|_{\ofpp{s} f}^{s},
\end{equation*}
which again by~\eqref{eq_bounded} implies
\begin{equation*}
\|\xi +\eta \|_{\ofpp{s} f}^{s} \geq \|\xi \|_{\ofpp{s} f}^{s} - d\, |
\eta |^{s}.
\end{equation*}
This completes the proof.
\end{proof}

Let $f\in W^{s,1}(\mathbb R^{n})$. Using that $\ofpp{s} f$ is a star body,
we rewrite~\eqref{eq_frac_func} as
%
%e21 #&#
\begin{align}
\label{eq_dualmixed_func}
\int _{\mathbb R^{n}}\int _{\mathbb R^{n}}
\frac{|f(x)-f(y)|}{\|x-y\|_{K}^{n+s}} \dvtex x \dvtex y = n\, \tilde V_{-s}(K,
\ofpp{s} f).
\end{align}
We remark that~\eqref{eq_dualmixed_func} resembles Lutwak's formula (9.1)
from \cite{Lutwak:centroid},
%
%e3 #&#
\begin{equation}
\frac{n+1}{2}\vert K\vert \, V_{1}(L, \Gamma K)= \tilde V_{-1}(K,
\opp L)
\nonumber \end{equation}
for convex bodies $K, L\subset \mathbb R^{n}$, where $V_{1}$ is the first
mixed volume and $\Gamma K$ the centroid body of $K$.

%s4 #&#
\section[The Limit Case]{The limit case $s \to 1^{-}$ of fractional polar projection bodies}
\label{sec_limit1}

We prove~\eqref{eq_limit} for functions of bounded variation. For
$f \in BV(\mathbb R^{n})$, the polar projection body is defined for
$\xi \in \snvtex $ by
%
%e4 #&#
\begin{equation}
\Vert \xi \Vert _{\opp f} = \frac {1}{2}\int _{\mathbb R^{n}} |
\langle \sigma _{f}(x),\xi \rangle |\dvtex |\Dvtex f|(x)
\nonumber \end{equation}
(see \cite{Tuo_Wang}). Note that for $f\in W^{1,1}(\mathbb R^{n})$, this
definition coincides with~\eqref{eq_polarprojection_functional}.

%t3 #&#
\begin{theorem}
\label{thm_limitM}
Let $f \in BV (\mathbb R^{n})$. For $\xi \in \snvtex $,
\begin{equation*}
\lim _{s\to 1^{-}} (1-s)\Vert \xi \Vert _{\ofpp{s} f}^{s} = 2\,
\Vert \xi \Vert _{\opp f}.
\end{equation*}
Moreover,
\begin{equation*}
\lim _{s\to 1^{-}} (1-s) \vol{\ofpp{s} f}^{-s/n} = 2\, \vol{\opp f}^{-1/n}
\end{equation*}
and
%
%e3 #&#
\begin{equation}
%!LEAP WARNING:  script commented \label{eq_limitK}
\lim _{s\to 1^{-}} (1-s) \tilde V_{-s}(K, \ofpp{s} f)= 2\,\tilde V_{-1}(K,
\opp f)
\nonumber \end{equation}
for every star body $K\subset \mathbb R^{n}$.
\end{theorem}

\goodbreak
We require the following two lemmas.

%l4 #&#
\begin{lemma}
\label{lem_limit1D}
Let $\varphi \colon [0,\infty ) \to [0,\infty )$ be a measurable function.
If $\lim _{t\to 0^{+}} \varphi (t) = \varphi (0)$ and
$\int _{0}^{\infty }t^{-s_{0}} \varphi (t) \dvtex t < \infty $ for some
$s_{0} \in (0,1)$, then
\begin{equation*}
\lim _{s\to 1^{-}} (1-s) \int _{0}^{\infty }t^{-s} \varphi (t) \dvtex t =
\varphi (0).
\end{equation*}
\end{lemma}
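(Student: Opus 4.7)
The plan is to split the integral at some small threshold $\delta > 0$ and treat the two pieces separately, using the continuity of $\varphi$ at $0$ to handle the inner integral and the integrability hypothesis to kill the tail. More precisely, I will write
\[
(1-s)\int_0^\infty t^{-s} \varphi(t)\d t
= (1-s)\int_0^\delta t^{-s} \varphi(t)\d t + (1-s)\int_\delta^\infty t^{-s} \varphi(t)\d t,
\]
where $\delta$ will depend on a prescribed $\varepsilon > 0$ but not on $s$.

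For the tail, I would restrict attention to $s \in (s_0, 1)$ so that $t^{-s} \leq \delta^{s_0 - s} t^{-s_0}$ for all $t \geq \delta$. This gives
\[
(1-s)\int_\delta^\infty t^{-s}\varphi(t)\d t \leq (1-s)\,\delta^{s_0 - s}\int_0^\infty t^{-s_0}\varphi(t)\d t,
\]
and the right side tends to $0$ as $s \to 1^-$ because the integral is finite by hypothesis and $(1-s)\delta^{s_0-s} \to 0$.

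For the inner piece, given $\varepsilon > 0$, I would use the assumption $\lim_{t\to 0^+} \varphi(t) = \varphi(0)$ to choose $\delta$ so that $|\varphi(t) - \varphi(0)| < \varepsilon$ for all $t \in [0,\delta]$. Splitting as
\[
(1-s)\int_0^\delta t^{-s}\varphi(t)\d t
= \varphi(0)(1-s)\int_0^\delta t^{-s}\d t + (1-s)\int_0^\delta t^{-s}(\varphi(t)-\varphi(0))\d t,
\]
the first term equals $\varphi(0)\,\delta^{1-s} \to \varphi(0)$ as $s \to 1^-$, while the second is bounded in absolute value by $\varepsilon\,\delta^{1-s}$, whose $\limsup$ as $s \to 1^-$ is $\varepsilon$. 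Combining the three estimates yields
\[
\Big|\limsup_{s\to 1^-}(1-s)\int_0^\infty t^{-s}\varphi(t)\d t - \varphi(0)\Big| \leq \varepsilon
\]
and similarly for $\liminf$. Since $\varepsilon > 0$ is arbitrary, the limit equals $\varphi(0)$.

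There is no real obstacle here; the only mild subtlety is making sure that the hypothesis $\int_0^\infty t^{-s_0}\varphi(t)\d t < \infty$ is used purely to dominate the tail (it already forces some decay of $\varphi$ at infinity in an averaged sense), while the one-sided continuity at $0$ is the sole ingredient needed to pin down the value of the limit. The structure of the argument is the usual one for showing that $(1-s)t^{-s}\d t$ behaves like a delta mass at $0$ as $s\to 1^-$ after normalization.
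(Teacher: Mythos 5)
Your argument is correct and matches the paper's proof essentially line for line: the same split at a fixed $\delta$, the same bound $t^{-s}\leq\delta^{s_0-s}t^{-s_0}$ on $[\delta,\infty)$ for $s\in(s_0,1)$ to kill the tail after multiplying by $(1-s)$, and the same use of one-sided continuity at $0$ combined with $(1-s)\int_0^\delta t^{-s}\,\mathrm{d}t=\delta^{1-s}\to1$ for the inner piece. No gaps.
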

\begin{proof}
Given $\varepsilon > 0$, we choose $\delta \in (0,1)$ so that
$0 \leq t \leq \delta $ implies
$|\varphi (t) - \varphi (0)| \leq \varepsilon $. We have
%
%e4 #&#
%e4 #&#
%e4 #&#
\begin{align}
\int _{0}^{\infty }t^{-s} &\varphi (t) \dvtex t
\nonumber \\
&= \int _{0}^{\delta }t^{-s} \varphi (0) \dvtex t + \int _{0}^{\delta }t^{-s}
(\varphi (t)-\varphi (0)) \dvtex t + \int _{\delta}^{\infty }t^{-s}
\varphi (t) \dvtex t
\nonumber \\
&= \frac{\delta ^{1-s}}{1-s} \varphi (0) + A + B
\nonumber \end{align}
where $(1-s)|A| \leq \varepsilon \delta ^{1-s} \leq \varepsilon $ and
\begin{equation*}
|B| \leq \delta ^{s_{0}-s} \int _{\delta}^{\infty }t^{-s_{0}}
\varphi (t) \dvtex t \leq \delta ^{s_{0}-s} \int _{0}^{\infty }t^{-s_{0}}
\varphi (t) \dvtex t
\end{equation*}
for $s \in (s_{0}, 1)$. Since $(1-s)B \to 0$ and
$\delta ^{1-s} \to 1$ as $s \to 1^{-}$, we can take $s$ sufficiently close
to $1$ so that $(1-s)|B| \leq \varepsilon $ and
$|\delta ^{1-s} \varphi (0) - \varphi (0)| \leq \varepsilon $, obtaining
$\left | (1-s) \int _{0}^{\infty }t^{-s} \varphi (t) \dvtex t - \varphi (0)
\right | \leq 3 \varepsilon $.
\end{proof}

%l5 #&#
\begin{lemma}
\label{lem_limitL1}
For $f \in BV(\mathbb R^{n})$ and $\xi \in \snvtex $,
\begin{equation*}
\lim _{t\to 0} \Big\|\frac{f(\cdot +t\xi )-f(\cdot )}{t}\Big\|_{1} =
\int _{\mathbb R^{n}} |\langle \sigma _{f}(x), \xi \rangle | \dvtex |\Dvtex f|(x).
\end{equation*}
\end{lemma}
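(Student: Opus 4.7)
The plan is to recognize the right-hand side as the total variation $|D_\xi f|(\R^n)$ of the distributional directional derivative $D_\xi f := \langle \sigma_f,\xi\rangle\,|\D f|$, which is a finite signed Radon measure, and then prove the limit by squeezing between matching upper and lower bounds that both equal $|D_\xi f|(\R^n)$.

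For the upper bound, I would first dispose of the smooth case: if $g \in C^\infty(\R^n) \cap BV(\R^n)$, then the fundamental theorem of calculus and Fubini give
\[\|g(\cdot+t\xi) - g(\cdot)\|_1 \leq |t| \int_{\R^n} |\langle \nabla g(x),\xi\rangle|\,\d x.\]
For general $f \in BV(\R^n)$, let $f_\varepsilon = f \ast \rho_\varepsilon$ be a standard mollification. Then $\nabla f_\varepsilon = (\D f) \ast \rho_\varepsilon$, and contractivity of convolution with an $L^1$ probability kernel on finite vector measures yields
\[\int_{\R^n} |\langle \nabla f_\varepsilon,\xi\rangle|\,\d x \leq \int_{\R^n} |\langle \sigma_f,\xi\rangle|\,\d|\D f|.\]
Applying the smooth inequality to $f_\varepsilon$ and passing to the $L^1$-limit $\varepsilon\to 0$, I obtain the uniform bound
\[\Big\|\frac{f(\cdot+t\xi)-f(\cdot)}{t}\Big\|_1 \leq \int_{\R^n} |\langle \sigma_f,\xi\rangle|\,\d|\D f|,\]
which in particular controls the $\limsup$ as $t\to 0$.

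For the matching lower bound, I would test the difference quotient against compactly supported test functions. A change of variable gives, for $\phi \in C_c^\infty(\R^n)$,
\[\int_{\R^n} \phi(x)\,\frac{f(x+t\xi)-f(x)}{t}\,\d x = \int_{\R^n} f(y)\,\frac{\phi(y-t\xi)-\phi(y)}{t}\,\d y,\]
and dominated convergence drives the right-hand side to $-\int f\, \langle \nabla\phi,\xi\rangle\,\d y$. Applying the defining identity \eqref{eq_bv} to the vector field $\nu = \phi\xi$ rewrites this limit as $\int \phi\,\d D_\xi f$. The uniform $L^1$ bound from the previous step extends this convergence to every $\phi \in C_c(\R^n)$ by a density argument, and the estimate
\[\Big|\int \phi\,\d D_\xi f\Big| \leq \|\phi\|_\infty\,\liminf_{t\to 0} \Big\|\frac{f(\cdot+t\xi)-f(\cdot)}{t}\Big\|_1\]
together with the standard duality $|D_\xi f|(\R^n) = \sup\{|\int\phi\,\d D_\xi f| : \phi\in C_c(\R^n),\,\|\phi\|_\infty\leq 1\}$ delivers $|D_\xi f|(\R^n) \leq \liminf$.

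The principal technical obstacle lies in the upper bound: one must verify that convolving $\D f$ with $\rho_\varepsilon$ yields exactly $\nabla f_\varepsilon$ and does not increase total variation. Both facts are standard for Radon measures, resting on Fubini and the triangle inequality applied to vector-valued integrals, but they are the place where the BV hypothesis really enters. Once these routine points are recorded, the conclusion is the sandwich $|D_\xi f|(\R^n) \leq \liminf \leq \limsup \leq |D_\xi f|(\R^n)$.
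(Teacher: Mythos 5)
Your proof is correct and follows the same squeeze strategy as the paper: establish weak convergence of the difference quotients, use $L^1$--$L^\infty$ duality for the $\liminf$ lower bound, and supply a matching $t$-uniform upper bound on the $L^1$ norm. The real difference lies in how the uniform upper bound is obtained. The paper reads it off directly from the same integration-by-parts identity that yields the weak limit,
\[
\int_{\R^n} g(x)\,\frac{f(x+t\xi)-f(x)}{t}\,\d x
   = \int_{\R^n}\Big(\int_0^1 g(x-rt\xi)\,\d r\Big)\langle\sigma_f(x),\xi\rangle\,\d|\D f|(x),
\]
bounding the right-hand side by $\|g\|_\infty\int|\langle\sigma_f,\xi\rangle|\,\d|\D f|$ and then taking the supremum over $g\in C_c^\infty$ with $\|g\|_\infty\leq1$. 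You instead mollify $f$, use the smooth estimate $\|g(\cdot+t\xi)-g(\cdot)\|_1\leq|t|\int|\langle\nabla g,\xi\rangle|\,\d x$ together with contractivity of convolution on the directional variation, and pass to the $L^1$ limit as $\varepsilon\to0$. Both are correct; yours is the standard modular BV argument, while the paper's is slightly more economical in that the single representation formula drives both halves of the sandwich. For the lower bound, the paper explicitly approximates $\sgn(\langle\sigma_f,\xi\rangle)$ by a smooth compactly supported function in $L^1(|\D f|)$, whereas you invoke the Riesz duality formula for the total variation of $D_\xi f$; these are the same argument expressed differently. One small remark: your detour through $C_c(\R^n)$ is unnecessary, since the duality formula for a finite Radon measure already holds with the supremum taken over $C_c^\infty$ test functions, which is exactly the class for which you established the weak limit.
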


\begin{proof}
Let $g:\mathbb R^{n}\to \mathbb R$ be a smooth function with compact support,
and write $\div _{x}$ for the divergence taken with respect to the variable~$x$.\vadjust{\eject} Using~\eqref{eq_bv}, we obtain
\begin{align*}
\int _{\mathbb R^{n}} \frac{f(x+t\xi )-f(x)}{t}\,&g(x) \dvtex x
\\
&= \int _{\mathbb R^{n}} f(x) \,\frac{g(x-t\xi )-g(x)}{t} \dvtex x
\\
&= - \int _{\mathbb R^{n}} f(x) \int _{0}^{1} \langle \nabla g(x-r t
\xi ), \xi \rangle \dvtex r \dvtex x
\\
&= - \int _{\mathbb R^{n}} f(x) \div _{x}\Big(\int _{0}^{1} g(x-r t
\xi ) \dvtex r\, \xi \Big)\dvtex x
\\
&= \int _{\mathbb R^{n}} \Big(\int _{0}^{1} g(x-r t\xi ) d r \Big)
\langle \sigma _{f}(x),\xi \rangle \dvtex |D f|(x).
\end{align*}
Therefore
%
%e22 #&#
\begin{equation}
\label{lem_limitL1:eq_weaklimit}
\begin{split} \lim _{t\to 0}\int _{\mathbb R^{n}}
\frac{f(x+t\xi )-f(x)}{t}&\,g(x) \dvtex x
\\
&= \int _{\mathbb R^{n}} g(x) \langle \sigma _{f}(x),\xi \rangle \dvtex |D
f|(x).
\end{split}
\end{equation}
\noindent
Let $\varepsilon > 0$. Since $|\Dvtex f|$ is a finite Radon measure, the set
of smooth functions with compact support is dense in the space of
$L^{1}$ functions w.r.t.\ $\vert \Dvtex f\vert $ (cf.~\cite[Proposition~7.9]{Folland}).
Hence, there is a smooth function $g:\mathbb R^{n} \to \mathbb R$ with
compact support such that
\begin{equation*}
\int _{\mathbb R^{n}} \vert g(x) - \sgn (\langle \sigma _{f}(x), \xi
\rangle )\vert \dvtex \vert \Dvtex f\vert (x) <\varepsilon
\end{equation*}
and $\|g\|_{\infty }\leq 1+\varepsilon $. By~\eqref{lem_limitL1:eq_weaklimit},
%
%e4 #&#
%e4 #&#
%e4 #&#
%e4 #&#
%e4 #&#
%e4 #&#
\begin{align}
(1+\varepsilon ) \liminf _{t\to 0} &\int _{\mathbb R^{n}}\Big\vert
\frac{f(x+t\xi )-f(x)}{t}\Big\vert \dvtex x
\nonumber \\
&\geq \lim _{t\to 0} \int _{\mathbb R^{n}} g(x)
\frac{f(x+t\xi )-f(x)}{t} \dvtex x
\nonumber \\
&= \int _{\mathbb R^{n}} g(x) \langle \sigma _{f}(x),\xi \rangle \dvtex |
\Dvtex f|(x)
\nonumber \\
&= \int _{\mathbb R^{n}} |\langle \sigma _{f}(x),\xi \rangle | \dvtex |
\Dvtex f|(x)
\nonumber \\
&
\phantom{=}
\, + \int _{\mathbb R^{n}} (g(x) - \sgn (\langle \sigma _{f}(x), \xi
\rangle )) \langle \sigma _{f}(x),\xi \rangle \dvtex |\Dvtex f|(x)
\nonumber \\
&\geq \int _{\mathbb R^{n}} |\langle \sigma _{f}(x),\xi \rangle | \dvtex |
\Dvtex f|(x) - \varepsilon .
\nonumber \end{align}
Since $\varepsilon >0$ was arbitrary, we obtain
\begin{equation*}
\liminf _{t\to 0} \Big\|\frac{f(\cdot +t\xi )-f(\cdot )}{t}\Big\|_{1}
\geq \int _{\mathbb R^{n}} |\langle \sigma _{f}(x), \xi \rangle | \dvtex |
\Dvtex f|(x).
\end{equation*}
For the opposite inequality, notice that
%
%e4 #&#
\begin{multline}
\left | \int _{\mathbb R^{n}} \Big(\int _{0}^{1} h(x-r t\xi ) \dvtex r
\Big) \langle \sigma _{f}(x),\xi \rangle \dvtex |\Dvtex f|(x) \right |
\\
\leq \|h\|_{\infty }\int _{\mathbb R^{n}} |\langle \sigma _{f}(x),
\xi \rangle |\dvtex |\Dvtex f|(x)
\nonumber \end{multline}
for every $h\in L^{\infty}(\mathbb R^{n})$.
\end{proof}

\begin{proof}[Proof of Theorem~\ref{thm_limitM}]
Define $\varphi : [0,\infty )\to [0,\infty )$ by
\begin{equation*}
\varphi (t) = \Big\|\frac{f(\cdot +t\xi )-f(\cdot )}{t}\Big\|_{1}
\end{equation*}
and note that $\varphi (t) \leq \frac{2\|f\|_{1}}{t}$ for $t> 0$. By Lemma~\ref{lem_limit1D} and Lemma~\ref{lem_limitL1},
\begin{equation*}
\lim _{s\to 1^{-}} (1-s) \int _{0}^{\infty }t^{-s} \Big\|
\frac{f(\cdot +t\xi )-f(\cdot )}{t}\Big\|_{1} \dvtex t = \int _{\mathbb R^{n}}
|\langle \sigma _{f}(x),\xi \rangle |\dvtex |\Dvtex f|(x).
\end{equation*}
By Proposition~\ref{prop_Mconvexbody}, we can use the dominated convergence
theorem to obtain,
\begin{align*}
\lim _{s\to 1^{-}}n\, &\vol{(1-s)^{-1/s} \ofpp{s} f}
\\
&= \lim _{s\to 1^{-}}\int _{\snvtex} \Big( (1-s) \int _{0}^{\infty }t^{-s}
\Big\|\frac{f(\cdot +t\xi )-f(\cdot )}{t}\Big\|_{1} \dvtex t \Big)^{-n/s}
\dvtex \xi
\\
&= \int _{\snvtex} \Big( \int _{\mathbb R^{n}} |\langle \sigma _{f}(x),
\xi \rangle |\dvtex \vert \Dvtex f\vert (x) \Big)^{-n} \dvtex \xi
\\
&= n\, \vol{ \tfrac {1}{2} \opp f}
\end{align*}
and
\begin{align*}
\lim _{s\to 1^{-}} n (1-s)\tilde V_{-s}(K, \ofpp{s} f) &= \lim _{s
\to 1^{-}}(1-s) \int _{\snvtex} \rho _{K}(\xi )^{n+s} \rho _{\ofpp{s} f}(
\xi )^{-s} \dvtex \xi
\\
&= 2 \int _{\snvtex} \rho _{K}(\xi )^{n+1} \rho _{\opp f}(\xi )^{-1} \dvtex
\xi
\\
&= 2n\,\tilde V_{-1}(K, \opp f),
\end{align*}
which completes the proof of the theorem.
\end{proof}

%s5 #&#
\section{Anisotropic fractional P\'olya--Szeg\H o inequalities}

Almgren and Lieb \cite{AlmgrenLieb} established the following P\'olya--Szeg\H o
inequality for fractional Sobolev norms. The equality case was settled
by Frank and Seiringer \cite{FrankSeiringer}. Let $0<s<1$ and
$n\ge 1$. For non-negative $f\in W^{s,1}(\mathbb R^{n})$,
%
%e23 #&#
\begin{equation}
\label{eq_EPS}
\int _{\mathbb R^{n}} \int _{\mathbb R^{n}}
\frac{\vert f(x)-f(y)\vert}{\vert x-y \vert ^{n+s}}\dvtex x\dvtex y \ge \int _{
\mathbb R^{n}} \int _{\mathbb R^{n}}
\frac{\vert f^{\star}(x)-f^{\star}(y)\vert}{\vert x-y \vert ^{n+s}}
\dvtex x\dvtex y,
\end{equation}
with equality precisely if $\{f \ge t\}$ is equivalent to a ball for almost
every $t>0$.

\goodbreak
The following improvement of~\eqref{eq_EPS} is key to proving the main
theorems in Section~\ref{sec_isoperimetric}. It is a variation of
\cite[Theorem 3.1]{andreas}.

%t6 #&#
\begin{theorem}%
\label{thm_fburchard}
Let $0<s<1$ and $K\subset \mathbb R^{n}$ a star body. For
$f :\mathbb R^{n} \to \mathbb R$ a non-negative integrable function,
%
%e6 #&#
\begin{equation}
%!LEAP WARNING:  script commented \label{eq_fracperSchwarz}
\int _{\mathbb R^{n}} \int _{\mathbb R^{n}}
\frac{|f(x) - f(y)|}{\|x-y\|^{n+s}_{K}} \dvtex x \dvtex y \geq \int _{
\mathbb R^{n}} \int _{\mathbb R^{n}}
\frac{|f^{\star}(x) - f^{\star}(y)|}{\|x-y\|^{n+s}_{K^{\star}}} \dvtex x
\dvtex y.
\nonumber \end{equation}
For non-zero $f\in W^{s,1}(\mathbb R^{n})$, there is equality if and only
if $K$ is a centered ellipsoid and for almost every $t>0$, the level set
$\{f \geq t\}$ is equivalent to an ellipsoid homothetic to $K$ or has measure
zero.
\end{theorem}

\noindent
The inequality in Theorem~\ref{thm_fburchard} (without the equality case)
follows rather directly from the Riesz rearrangement inequality, which
is stated in full generality, for example, in \cite{BLL}.

%t7 #&#
\begin{theorem}[Riesz's rearrangement inequality]
\label{thm_BLL}
For $f,g,k:\mathbb R^{n} \to \mathbb R$ non-negative measurable functions,
\begin{equation*}
\int _{\mathbb R^{n}}\int _{\mathbb R^{n}} f(x) k(x-y) g(y) \dvtex x \dvtex y
\leq \int _{\mathbb R^{n}}\int _{\mathbb R^{n}} f^{\star}(x) k^{\star}(x-y)
g^{\star}(y) \dvtex x \dvtex y,
\end{equation*}
and the right-hand side is infinite if the left-hand side is infinite.
\end{theorem}

To establish the equality case in Theorem~\ref{thm_fburchard}, we use the
characterization of equality cases in a version of the Riesz rearrangement
inequality due to Burchard \cite{Burchard96}.

%t8 #&#
\begin{theorem}[Burchard]
\label{thm_burchard}
Let $A,B$ and $C$ be sets of finite positive measure in
$\,\mathbb R^{n}$ and denote by $\alpha , \beta $ and $\gamma $ the radii
of their Schwarz symmetrals $A^{\star}, B^{\star}$ and $C^{\star}$. For
$\,\vert \alpha - \beta \vert < \gamma < \alpha +\beta $, there is equality
in
%
%e8 #&#
\begin{multline}
\int _{\mathbb R^{n}}\int _{\mathbb R^{n}} \chi _{A}(y) \chi _{B}(x-y)
\chi _{C}(x) \dvtex x \dvtex y
\\
\leq \int _{\mathbb R^{n}}\int _{\mathbb R^{n}} \chi _{A^{\star}}(y)
\chi _{B^{\star}}(x-y) \chi _{C^{\star}}(x) \dvtex x \dvtex y
\nonumber \end{multline}
if and only if, up to sets of measure zero,
\begin{equation*}
A=a+\alpha D, B = b+\beta D, C = c+\gamma D,
\end{equation*}
where $D$ is a centered ellipsoid, and $a,b$ and $c=a+b$ are vectors in
$\mathbb R^{n}$.
\end{theorem}

By the co-area formula~\eqref{eq_coarea}, Theorem~\ref{thm_fburchard} is
an immediate consequence of the following result for anisotropic fractional
perimeters. We say that $A\subset \mathbb R^{n}$ is homothetic to
$D\subset \mathbb R^{n}$ if there are $a\in \mathbb R^{n}$ and
$\alpha >0$ such that $A=a+\alpha \,D$.

%t9 #&#
\begin{theorem}%
\label{thm_perburchard}
Let $0<s<1$ and $K\subset \mathbb R^{n}$ be a star body. For
$E\subset \mathbb R^{n}$ measurable,
%
%e24 #&#
\begin{equation}
\label{eq_perburchard}
\int _{E}\int _{E^{c}}\frac{1}{\|x-y\|^{n+s}_{K}} \dvtex x \dvtex y \geq
\int _{E^{\star}}\int _{(E^{\star})^{c}}
\frac{1}{\|x-y\|^{n+s}_{K^{\star}}} \dvtex x \dvtex y.
\end{equation}
For $E$ with finite positive $s$-perimeter, equality holds if and only
if $K$ is a centered ellipsoid and $E$ is equivalent to an ellipsoid homothetic
to~$K$.
\end{theorem}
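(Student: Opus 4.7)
The plan is to reduce \eqref{eq_perburchard} to a pointwise application of the Riesz rearrangement inequality (Theorem \ref{thm_BLL}) through a layer-cake decomposition of the kernel $\Phi(z)=\|z\|_K^{-(n+s)}$, and to settle the equality case by invoking Burchard's Theorem \ref{thm_burchard}.

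First I would dispose of the trivial case $\vol{E}=\infty$, where $E^\star=\R^n$ makes the right-hand side zero. So assume $\vol{E}<\infty$ and, for each $t>0$, set
\[K_t=\{z\in\R^n:\|z\|_K^{-(n+s)}\geq t\}=t^{-1/(n+s)}K,\]
a star body of finite Lebesgue measure. The layer cake formula together with Fubini's theorem gives
\[\int_E\int_{E^c}\frac{1}{\|x-y\|_K^{n+s}}\d x\d y=\int_0^\infty\int_E\int_{E^c}\chi_{K_t}(x-y)\d x\d y\d t,\]
and a parallel identity holds on the symmetrized side, because $(K_t)^\star=(K^\star)_t$ (both being the centered Euclidean ball of volume $\vol{K_t}$).

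The algebraic heart of the argument is the decomposition $\chi_{E^c}=1-\chi_E$, which lets me write
\[\int_E\int_{E^c}\chi_{K_t}(x-y)\d x\d y=\vol{E}\vol{K_t}-\int_E\int_E\chi_{K_t}(x-y)\d x\d y,\]
and analogously for $E^\star$ and $K_t^\star$. Since $\vol{E}=\vol{E^\star}$ and $\vol{K_t}=\vol{K_t^\star}$, the constant terms match, and the inequality at each level $t$ reduces to
\[\int_E\int_E\chi_{K_t}(x-y)\d x\d y\leq\int_{E^\star}\int_{E^\star}\chi_{K_t^\star}(x-y)\d x\d y,\]
which is exactly Theorem \ref{thm_BLL} applied with $f=g=\chi_E$ and $k=\chi_{K_t}$. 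Integrating in $t$ recovers \eqref{eq_perburchard}.

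For the equality case, assume $P_s(E,K)$ is finite and positive and equality holds. Then the non-negative function
\[h(t):=\int_E\int_{E^c}\chi_{K_t}(x-y)\d x\d y-\int_{E^\star}\int_{(E^\star)^c}\chi_{K_t^\star}(x-y)\d x\d y\]
has $\int_0^\infty h(t)\d t=0$, so $h(t)=0$ for almost every $t>0$. For each such $t$, I would apply Burchard's Theorem \ref{thm_burchard} with $A=C=E$ and $B=K_t$. The Schwarz radii are $\alpha=\gamma=(\vol{E}/\omega_n)^{1/n}$ and $\beta=t^{-1/(n+s)}(\vol{K}/\omega_n)^{1/n}$, so for all sufficiently large $t$ the non-degeneracy condition $|\alpha-\beta|<\gamma<\alpha+\beta$, here equivalent to $0<\beta<2\alpha$, is met. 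Burchard then forces $E=a+\alpha D$, $K_t=b+\beta D$ and $E=(a+b)+\gamma D$ for some centered ellipsoid $D$; comparing the first and third identities gives $b=0$, so $K_t$ (hence $K$) is a centered ellipsoid and $E$ is a translated, homothetic copy of it.

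The step I expect to be most delicate is the passage from the integrated equality to the pointwise-in-$t$ equality and the subsequent verification that Burchard's non-degeneracy condition holds on a genuinely non-negligible set of $t$; both are secured by the finiteness of $P_s(E,K)$ (which makes $h$ an integrable non-negative function of $t$) together with the fact that $\beta\to 0$ as $t\to\infty$.
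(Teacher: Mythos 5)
Your proof follows essentially the same route as the paper's: layer-cake decomposition of the kernel into indicator functions of scaled copies of $K$, the algebraic rewriting $\chi_E\chi_{E^c}=\chi_E-\chi_E\chi_E$ to reduce each level to the Riesz rearrangement inequality, and Burchard's theorem with $A=C=E$, $B=t^{-1/(n+s)}K$ to force $b=0$ in the equality case. The only difference is that you spell out the verification of Burchard's nondegeneracy condition (namely that $\beta\to 0$ as $t\to\infty$, so a suitable $t$ with $h(t)=0$ and $0<\beta<2\alpha$ exists), a point the paper leaves implicit with the phrase ``for suitable $t>0$.''
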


\begin{proof}
For $z\ne 0$,
\begin{equation*}
\|z\|_{K}^{-n-s} = \int _{0}^{\infty }k_{t}(z) \dvtex t \text{, where } k_{t}(z)
= \chi _{t^{- 1/({n+s})}K}(z),
\end{equation*}
and using Fubini's theorem, we obtain
\begin{equation*}
\int _{E}\int _{E^{c}} \frac{1}{\|x-y\|^{n+s}_{K}} \dvtex x \dvtex y = \int _{0}^{
\infty }\int _{\mathbb R^{n}}\int _{\mathbb R^{n}} \chi _{E}(x) \chi _{E^{c}}(y)
k_{t}(x-y) \dvtex x \dvtex y \dvtex t.
\end{equation*}
For fixed $t \in (0,\infty )$, we have
%
%e5 #&#
%e25 #&#
%e5 #&#
\begin{align}
\int _{\mathbb R^{n}}\int _{\mathbb R^{n}} \chi _{E}(x) &\chi _{E^{c}}(y)
k_{t}(x-y) \dvtex x \dvtex y
\nonumber \\
&= \int _{\mathbb R^{n}}\int _{\mathbb R^{n}} \big( \chi _{E}(x)-
\chi _{E}(x) \chi _{E}(y) \big)k_{t}(x-y) \dvtex x \dvtex y
\label{eq_rewrite}
\\
&= t^{-\frac {n}{n+s}} \vol{K} \vol{E} - \int _{\mathbb R^{n}}\int _{
\mathbb R^{n}}\chi _{E}(x)\, k_{t}(x-y) \chi _{E}(y) \dvtex x \dvtex y.
\nonumber \end{align}
Clearly, the first term is invariant under Schwarz symmetrization. For
the second term, we can apply Theorem~\ref{thm_BLL} to show that the left-hand
side does not increase under Schwarz symmetrization. Thus
%
%e5 #&#
\begin{multline}
\int _{\mathbb R^{n}}\int _{\mathbb R^{n}} \chi _{E}(x) k_{t}(x-y)
\chi _{E^{c}}(y) \dvtex x \dvtex y
\\
\ge \int _{\mathbb R^{n}}\int _{\mathbb R^{n}} \chi _{E^{\star}}(x)\, k^{
\star}_{t}(x-y) \chi _{(E^{\star})^{c}}(y) \dvtex x \dvtex y
\nonumber \end{multline}
for $t>0$ and integrating this inequality for $t\in (0,\infty )$, we obtain~\eqref{eq_perburchard}.

If there is equality in~\eqref{eq_perburchard}, it follows from~\eqref{eq_rewrite} that for almost every $t>0$,
\begin{equation*}
\int _{\mathbb R^{n}}\int _{\mathbb R^{n}} \chi _{E}(x) \chi _{t^{-1/(n+s)}K}(x-y)
\chi _{E}(y) \dvtex x \dvtex y
\end{equation*}
is invariant under Schwarz symmetrization. Now we may apply Theorem~\ref{thm_burchard} with $A = C = E$ and $B = t^{- 1/({n+s})} K$ for suitable
$t>0$. Since $A=C$, we have $a=c$, and this implies $b=0$. Hence,
$K$ is a centered ellipsoid, and $E$ is equivalent to an ellipsoid homothetic
to $K$.
\end{proof}

To establish corresponding inequalities for Steiner symmetrization, we
require the following result.

%t10 #&#
\begin{theorem}[Rogers \cite{Rogers56}]
\label{thm_sBLL}
Let $\xi \in \snvtex $. For $f,g,k:\mathbb R^{n} \to \mathbb R$ non-negative
and measurable,
\begin{equation*}
\int _{\mathbb R^{n}}\int _{\mathbb R^{n}} f(x) k(x-y) g(y) \dvtex x \dvtex y
\leq \int _{\mathbb R^{n}}\int _{\mathbb R^{n}} f^{\xi}(x) k^{\xi}(x-y)
g^{\xi}(y) \dvtex x \dvtex y.
\end{equation*}
\end{theorem}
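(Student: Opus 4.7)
The plan is to reduce the $n$-dimensional Steiner inequality to the one-dimensional Riesz rearrangement inequality by Fubini-slicing along lines parallel to $\xi$, which is the standard route for Steiner-type rearrangement inequalities.

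First, since the assertion is invariant under an orthogonal change of coordinates and since both Steiner symmetrization and Lebesgue measure transform covariantly under the rotation sending $\xi$ to $e_n$, I would reduce without loss of generality to the case $\xi = e_n$. Writing $x=(x',x_n)$ and $y=(y',y_n)$ with $x',y'\in\R^{n-1}$, Fubini's theorem gives
\[ \int_{\R^n}\int_{\R^n} f(x)\,k(x-y)\,g(y)\d x\d y = \int_{\R^{n-1}}\int_{\R^{n-1}} I(x',y')\d x'\d y', \]
where
\[ I(x',y') = \int_{\R}\int_\R f(x',x_n)\, k(x'-y', x_n-y_n)\, g(y',y_n) \d x_n\d y_n. \]
For each fixed $(x',y')$, the integrand is built from three non-negative measurable functions of a single variable: $F = f(x',\cdot)$, $G = g(y',\cdot)$, and $H = k(x'-y',\cdot)$.

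Second, I would apply the one-dimensional case of Theorem \ref{thm_BLL} pointwise in $(x',y')$ to obtain
\[ I(x',y') \le \int_\R\int_\R F^\star(x_n)\, H^\star(x_n-y_n)\, G^\star(y_n) \d x_n\d y_n, \]
where $^\star$ denotes the one-dimensional symmetric decreasing rearrangement. I would then identify these one-dimensional rearrangements with slices of the Steiner symmetrals: by definition, $\{f\ge t\}^{e_n}$ meets each vertical line $\{x'\}\times\R$ in the centered interval of the same length as $\{f\ge t\}\cap(\{x'\}\times\R)$, so the one-dimensional symmetric decreasing rearrangement of $\chi_{\{f\ge t\}}(x',\cdot)$ coincides with $\chi_{\{f\ge t\}^{e_n}}(x',\cdot)$. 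Integrating in $t$ via the layer-cake formula yields $f^{e_n}(x',\cdot) = F^\star$ almost everywhere in $x'$, and similarly for $g^{e_n}$ and $k^{e_n}$. Substituting these identifications back into the slice inequality and reapplying Fubini produces exactly the desired bound.

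The main obstacle is the one-dimensional case of Riesz's rearrangement inequality invoked in the second step. Although its statement is subsumed in Theorem \ref{thm_BLL}, its proof is the genuinely substantive part: one first checks the inequality for three indicator functions of intervals by a direct elementary calculation (the interaction is maximized when all three intervals are centered), extends to indicator functions of arbitrary measurable sets of finite measure by approximation through finite unions of intervals together with a monotonicity/compactness argument, and finally passes to general non-negative functions via the layer-cake decomposition and monotone convergence. A secondary technical point is the joint measurability in $(x',y')$ of both sides of the pointwise inequality, required to apply Fubini rigorously; this is handled by approximating $F$, $G$, $H$ by simple functions and invoking dominated convergence.
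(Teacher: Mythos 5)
The paper does not prove this theorem; it cites Rogers and notes in the following remark that the statement is a special case of the inequalities of Rogers \cite{Rogers57} and of Brascamp--Lieb--Luttinger. Your slicing argument is nevertheless a correct and essentially classical proof. Reducing to $\xi=e_n$ by rotation invariance, splitting $x=(x',x_n)$ and applying Fubini, invoking the one-dimensional Riesz rearrangement inequality on each line $\{x'\}\times\R$, and identifying $f^{e_n}(x',\cdot)=(f(x',\cdot))^\star$ for a.e.\ $x'$ via the layer-cake formula (the Steiner slice of a superlevel set is by construction the one-dimensional symmetric rearrangement of the slice) are exactly the standard ingredients, and they assemble into the stated inequality. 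Two details you should make explicit to be fully rigorous: first, the identification $k^{e_n}(x'-y',\cdot)=(k(x'-y',\cdot))^\star$ holds only for a.e.\ $z'=x'-y'$, so you need the observation that the preimage of a null set in $\R^{n-1}$ under $(x',y')\mapsto x'-y'$ is null in $\R^{2(n-1)}$; second, for the layer-cake identification to make sense you need the one-dimensional slices $\{x_n: f(x',x_n)\ge t\}$ to have finite measure for a.e.\ $x'$, which follows from Tonelli once the superlevel sets $\{f\ge t\}$ have finite measure --- consistent with the paper's convention of defining Steiner symmetrals for integrable functions and, in the subsequent application, decomposing $\|\cdot\|_K^{-n-s}$ into indicators of bounded star bodies before symmetrizing. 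Your closing remark on joint measurability is apt but can be dispatched more directly: $x'\mapsto\vert\{x_n:f(x',x_n)\ge t\}\vert$ is measurable by Tonelli, so $\{f\ge t\}^\xi$ is a measurable set, which makes both sides of the slice inequality jointly measurable without any approximation argument.
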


\noindent
We remark that the above theorem is a special case of the inequalities
of Rogers~\cite{Rogers57} and Brascamp--Lieb--Luttinger
\cite[Lemma~3.2]{BLL}.

\goodbreak
As in the case of Schwarz symmetrization, we obtain the following consequences.

%t11 #&#
\begin{theorem}%
\label{thm_perburchard_steiner}
Let $0<s<1$ and $\xi \in \snvtex $. If $K\subset \mathbb R^{n}$ is a star body,
then
%
%e11 #&#
\begin{equation}
%!LEAP WARNING:  script commented \label{eq_perburchard_steiner}
\int _{E}\int _{E^{c}}\frac{1}{\|x-y\|^{n+s}_{K}} \dvtex x \dvtex y \geq
\int _{E^{\xi}}\int _{(E^{\xi})^{c}}
\frac{1}{\|x-y\|^{n+s}_{K^{\xi}}} \dvtex x \dvtex y
\nonumber \end{equation}
for $E\subset \mathbb R^{n}$ measurable.
\end{theorem}

%t12 #&#
\begin{theorem}%
\label{thm_fburchard_steiner}
Let $0<s<1$ and $\xi \in \snvtex $. If $K\subset \mathbb R^{n}$ is a star body,
then
%
%e12 #&#
\begin{equation}
%!LEAP WARNING:  script commented \label{eq_fburchard_steiner}
\int _{\mathbb R^{n}} \int _{\mathbb R^{n}}
\frac{|f(x) - f(y)|}{\|x-y\|^{n+s}_{K}} \dvtex x \dvtex y \geq \int _{
\mathbb R^{n}} \int _{\mathbb R^{n}}
\frac{|f^{\xi}(x) - f^{\xi}(y)|}{\|x-y\|^{n+s}_{K^{\xi}}} \dvtex x \dvtex y
\nonumber \end{equation}
for $f :\mathbb R^{n} \to \mathbb R$ non-negative and integrable.
\end{theorem}

%s6 #&#
\section{Affine fractional P\'olya--Szeg\H o inequalities}
\label{sec_fracpettyproj}

We establish the following affine version of the fractional P\'olya--Szeg\H o
inequality~\eqref{eq_EPS} by Almgren and Lieb.

%t13 #&#
\begin{theorem}%
\label{thm_aPS}
For $0<s<1$ and non-negative $f\in W^{s,1}(\mathbb R^{n})$,
%
%e26 #&#
\begin{equation}
\label{eq_aPS}
\vol{\ofpp{s} f}^{-s/n} \geq \vol{\ofpp{s} f^{\star}}^{-s/n}.
\end{equation}
There is equality if and only if for almost every $t>0$, the level set
$\{f \ge t\}$ has measure zero or is homothetic to an ellipsoid (independent
of $t$) up to a set of measure zero.
\end{theorem}

\begin{proof}
Let $K\subset \mathbb R^{n}$ be a star body. By Theorem~\ref{thm_fburchard},~\eqref{eq_dualmixed_func}, and~\eqref{eq_mixedvolume}, we have
%
%e6 #&#
%e6 #&#
%e6 #&#
\begin{align}
\tilde V_{-s} (K, \ofpp sf) &\geq \tilde V_{-s} (K^{\star}, \ofpp sf^{
\star})
\nonumber \\
&\geq \vol{K^{\star}}^{({n+s})/{n}} \vol{\ofpp sf^{\star}}^{- s/n}
\nonumber \\
&= \vol{K}^{({n+s})/{n}} \vol{\ofpp sf^{\star}}^{- s/n}.
\nonumber \end{align}
Setting $K = \ofpp sf$, we obtain that
%
%e6 #&#
\begin{align}
\vol{\ofpp sf} = \tilde V_{-s} (\ofpp sf, \ofpp sf) \geq
\vol{\ofpp sf}^{({n+s})/{n}} \vol{\ofpp sf^{\star}}^{-\frac {s}{n}},
\nonumber \end{align}
which completes the proof of the inequality. The equality case follows
from the equality case of Theorem~\ref{thm_fburchard}.
\end{proof}

Multiplying~\eqref{eq_aPS} by $(1-s)$ and letting $s\to 1^{-}$, we obtain
from Theorem~\ref{thm_aPS} and Theorem~\ref{thm_limitM} the following affine
P\'olya--Szeg\H o inequality by Cianchi, Lutwak, Yang, and Zhang
\cite{CLYZ2009} (see also \cite{Nguyen16}):
%
%e6 #&#
\begin{equation}
\vol{\opp f}^{-1/n} \geq \vol{\opp f^{\star}}^{-1/n}
\nonumber \end{equation}
for $f\in W^{1,1}(\mathbb R^{n})$.

Using the same proof as for Theorem~\ref{thm_aPS} for Steiner symmetrization
and replacing Theorem~\ref{thm_fburchard} with Theorem~\ref{thm_fburchard_steiner}, we obtain the following result.

%t14 #&#
\begin{theorem}%
\label{thm_aSteiner}
Let $0<s<1$ and $\xi \in \snvtex $. Then
%
%e14 #&#
\begin{equation}
%!LEAP WARNING:  script commented \label{eq_aSteiner}
\vol{\ofpp{s} f}^{-s/n} \geq \vol{\ofpp{s} f^{\xi}}^{-s/n}
\nonumber \end{equation}
for non-negative $f\in W^{s,1}(\mathbb R^{n})$.
\end{theorem}

As before, we obtain from Theorem~\ref{thm_aSteiner} and Theorem~\ref{thm_limitM} the following affine inequality:
%
%e6 #&#
\begin{equation}
%!LEAP WARNING:  script commented \label{eq_aPS_steiner}
\vol{\opp f}^{-1/n} \geq \vol{\opp f^{\xi}}^{-1/n}
\nonumber \end{equation}
for $f\in W^{1,1}(\mathbb R^{n})$ and $\xi \in \snvtex $.

%s7 #&#
\section{Fractional Petty projection inequalities}
\label{sec_fracpetty}

A set $E\subset \mathbb R^{n}$ of finite measure is a set of finite perimeter
if its indicator function $\chi _{E}$ is in $BV(\mathbb R^{n})$. This allows
us to translate results for functions in $BV(\mathbb R^{n})$ to sets of
finite perimeter and finite measure. The following result is an immediate
consequence of the affine fractional P\'olya--Szeg\H o inequality from
Theorem~\ref{thm_aPS}.

%t15 #&#
\begin{theorem}%
\label{thm_fracpetty1}
For $\,0<s<1$ and $E\subset \mathbb R^{n}$ of finite $s$-perimeter and
finite measure,
\begin{equation*}
\vol{\ofpp{s} E}^{-s/n} \geq \vol{\ofpp{s} E^{\star}}^{-s/n}
\end{equation*}
with equality precisely if $E$ is equivalent to an ellipsoid.
\end{theorem}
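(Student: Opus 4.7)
The plan is to derive this statement as a direct corollary of the affine fractional Pólya--Szegő inequality (Theorem~\ref{thm_aPS}) by specializing to indicator functions. Concretely, I would apply Theorem~\ref{thm_aPS} to $f = \chi_E$. The hypothesis that $E$ has finite measure and finite $s$-perimeter is, as recorded in Section~\ref{sec_prelim}, equivalent to the statement $\chi_E \in W^{s,1}(\R^n)$, so the application is legitimate and $f$ is non-negative.

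For the left-hand side, no work is required: by the very definition $\ofpp{s} E := \ofpp{s} \chi_E$. For the right-hand side, the key observation is that Schwarz symmetrization commutes with passage to indicator functions. Indeed, the superlevel sets of $\chi_E$ are $E$ for $0 < t \leq 1$ and empty for $t > 1$ (up to a set of measure zero), so the layer cake formula applied to $(\chi_E)^\star$ yields $(\chi_E)^\star = \chi_{E^\star}$. Consequently $\ofpp{s}(\chi_E)^\star = \ofpp{s} E^\star$, and the inequality provided by Theorem~\ref{thm_aPS} is exactly the claim.

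For the equality case, Theorem~\ref{thm_aPS} asserts that equality holds if and only if $\{f \geq t\}$ is equivalent to an ellipsoid for almost every $t > 0$. When $f = \chi_E$, every superlevel set is either (equivalent to) $E$ or empty, so the equality condition collapses to the single requirement that $E$ itself be equivalent to an ellipsoid.

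There is no serious obstacle: the statement is essentially a translation of Theorem~\ref{thm_aPS} from functions to sets under the correspondence $\chi_E \leftrightarrow E$. All the substantive work—Theorem~\ref{thm_fburchard} via the Riesz rearrangement inequality together with the dual mixed volume inequality—has already been done upstream.
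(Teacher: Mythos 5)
Your proof is correct and is exactly the paper's argument: the paper states that Theorem~\ref{thm_fracpetty1} ``is an immediate consequence of the affine fractional P\'olya--Szeg\H o inequality from Theorem~\ref{thm_aPS},'' and you have simply spelled out the specialization $f=\chi_E$, the identity $(\chi_E)^\star=\chi_{E^\star}$, and the collapse of the equality condition.
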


\noindent
Here, we write $\ofpp{s} E$ for the fractional polar projection body of
$\chi _{E}$. Note that
\begin{equation*}
\Vert \xi \Vert ^{s}_{\ofpp s E}= \int _{0}^{\infty }t^{-s}
\frac{\vol{E\triangle (E-t\xi )}}{t}\dvtex t
\end{equation*}
for $E\subset \mathbb R^{n}$ of finite $s$-perimeter and finite measure.
Also note that~\eqref{eq_dualmixed_func} implies that
%
%e27 #&#
\begin{equation}
\label{eq_fracperi_proj}
2 \int _{E}\int _{E^{c}} \frac{1}{\|x-y\|_{K}^{n+s}} \dvtex x \dvtex y = n\,
\tilde V_{-s}(K, \ofpp{s} E)
\end{equation}
for $K\subset \mathbb R^{n}$ a star body.

From Theorem~\ref{thm_fracpetty1}, we easily obtain the first inequality
of the following result, which we call the fractional Petty projection
inequality.
%
%t16 #&#
\begin{theorem}%
\label{thm_fracpetty}
For $\,0<s<1$ and $E\subset \mathbb R^{n}$ of finite $s$-perimeter and
finite measure,
%
%e16 #&#
\begin{equation}
\label{eq_fracpetty}
\Big(\frac{\vol{ E}}{\vol{\Bvtex}}\Big)^{ {(n-s)}/n} \le \Big(
\frac{\vol{\ofpp{s} E}}{\vol{\ofpp{s} \Bvtex}}\Big)^{- s/ n}
\le \frac{ P_{s}(E)}{P_{s}(\Bvtex )}.
\end{equation}
There is equality in the first inequality if and only if $E$ is equivalent
to an ellipsoid. There is equality in the second inequality if and only
if $E$ is equivalent to a set of constant $s$-fractional brightness.
\end{theorem}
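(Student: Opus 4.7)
The plan is to derive each inequality from results already established in the excerpt: the first from Theorem~\ref{thm_fracpetty1} together with a scaling computation at $\B$, and the second from the identity \eqref{eq_fracperi_proj} combined with the dual mixed volume inequality \eqref{eq_mixedvolume}.

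For the first inequality, I would first compute $\ofpp{s}(r\B)$ explicitly. The substitutions $x = ry$ and $t = ru$ in \eqref{eq_defpp2} yield $\|\xi\|^s_{\ofpp{s}(r\B)} = r^{n-s}\|\xi\|^s_{\ofpp{s}\B}$ for every $r>0$, hence $\ofpp{s}(r\B) = r^{-(n-s)/s}\ofpp{s}\B$ and
\[
\vol{\ofpp{s}(r\B)} = r^{-n(n-s)/s}\vol{\ofpp{s}\B}.
\]
Choosing $r = (\vol{E}/\vol{\B})^{1/n}$, so that $E^\star = r\B$, this identity rewrites as
\[
\Big(\frac{\vol{\ofpp{s} E^\star}}{\vol{\ofpp{s}\B}}\Big)^{-s/n} = \Big(\frac{\vol{E}}{\vol{\B}}\Big)^{(n-s)/n}.
\]
Theorem~\ref{thm_fracpetty1} then supplies $\vol{\ofpp{s}E}^{-s/n} \geq \vol{\ofpp{s}E^\star}^{-s/n}$, with equality iff $E$ is equivalent to an ellipsoid, which combined with the above identity gives the first inequality together with its equality case.

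For the second inequality, I would take $K = \B$ in \eqref{eq_fracperi_proj}, so that $\|x-y\|_\B = |x-y|$ and $2P_s(E) = n\,\tilde V_{-s}(\B, \ofpp{s}E)$; the same identity at $E=\B$ reads $2P_s(\B) = n\,\tilde V_{-s}(\B, \ofpp{s}\B)$. The dual mixed volume inequality \eqref{eq_mixedvolume} with $p = -s < 0$ gives
\[
\tilde V_{-s}(\B, \ofpp{s}E) \geq \vol{\B}^{(n+s)/n}\,\vol{\ofpp{s}E}^{-s/n},
\]
with equality iff $\B$ and $\ofpp{s}E$ are dilates. Since $\chi_\B$ is rotationally symmetric, $\ofpp{s}\B$ is a centered Euclidean ball, so the inequality is an equality at $E = \B$; dividing the inequality at $E$ by the equality at $\B$ produces the desired second inequality. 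Equality for general $E$ is precisely the condition that $\ofpp{s}E$ be a centered ball, i.e.\ that $\|\xi\|^s_{\ofpp{s}E} = \int_0^\infty t^{-s-1}\vol{E\triangle(E-t\xi)}\,\d t$ is independent of $\xi \in \sn$, which is the statement that $E$ has constant $s$-fractional brightness.

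The only nontrivial step is the scaling identity $\ofpp{s}(r\B) = r^{-(n-s)/s}\ofpp{s}\B$, which is a routine change of variables; once that is in hand, both inequalities fall out of Theorem~\ref{thm_fracpetty1} and the dual mixed volume inequality, and the equality case for the second inequality is the direct translation of the dual mixed volume equality condition into geometric language.
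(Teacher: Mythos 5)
Your proof is correct and matches the paper's approach: the first inequality and its equality case come from Theorem~\ref{thm_fracpetty1} together with the homogeneity of $\ofpp{s}$ under scaling, and the second inequality and its equality case follow from \eqref{eq_fracperi_proj} (equivalently \eqref{eq_dualmixed_func}) and the dual mixed volume inequality \eqref{eq_mixedvolume}. You have merely filled in the routine scaling computation that the paper leaves implicit.
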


\noindent
Here, we say that $E\subset \mathbb R^{n}$ is of constant $s$-fractional
brightness if $\ofpp{s} E$ is a dilate of $\Bvtex $. The second inequality
in~\eqref{eq_fracpetty} and its equality case follow directly from~\eqref{eq_fracperi_proj} and the dual mixed volume inequality~\eqref{eq_mixedvolume}.

Let $E\subset \mathbb R^{n}$ be a set of finite perimeter and finite measure.
The perimeter of $E$ is defined as $P(E)= \vert \Dvtex \chi _{E}\vert $. By
D\'avila's version of~\eqref{eq_BBM},
\begin{equation*}
\lim _{s\to 1^{-}} \frac{P_{s}(E)}{P_{s}(\Bvtex )} = \frac{P(E)}{P(\Bvtex )}.
\end{equation*}
Taking the limit $s\to 1^{-}$ in~\eqref{eq_fracpetty}, we obtain from this
and Theorem~\ref{thm_limitM} that
%
%e7 #&#
\begin{equation}
\label{eq_petty}
\Big(\frac{\vol{ E}}{\vol{\Bvtex}}\Big)^{ {(n-1)}/n} \le \Big(
\frac{\vol{\opp E}}{\vol{\opp \Bvtex}}\Big)^{- 1/ n}
\le \frac{ P(E)}{P(\Bvtex )}
\end{equation}
for $E\subset \mathbb R^{n}$ of finite perimeter and finite measure. The
first inequality is the generalized Petty projection inequality, which
was proved by Gaoyong Zhang~\cite{Zhang99} for compact sets with piecewise
$C^{1}$ boundary and in full generality by Tuo Wang \cite{Tuo_Wang}. For
$E$ a convex body, it is the classical Petty projection inequality~\eqref{eq_petty_convex}. There is equality in the second inequality of~\eqref{eq_petty} if and only if $E$ is of constant brightness.

The Steiner inequality for fractional polar projection bodies is contained
in the following result, which is an immediate consequence of Theorem~\ref{thm_aSteiner}.

%t17 #&#
\begin{theorem}%
\label{thm_asSteiner}
Let $\,0<s<1$ and $\xi \in \snvtex $. Then
%
%e17 #&#
\begin{equation}
%!LEAP WARNING:  script commented \label{eq_asSteiner}
\vol{\ofpp{s} E}^{-s/n} \geq \vol{\ofpp{s} E^{\xi}}^{-s/n}
\nonumber \end{equation}
for $E\subset \mathbb R^{n}$ of finite $s$-perimeter and finite Lebesgue
measure.
\end{theorem}

\noindent
Taking the limit as $s\to 1^{-}$ and using Theorem~\ref{thm_limitM}, we
obtain from Theorem~\ref{thm_asSteiner} the following Steiner inequality
for polar projection bodies, which was recently established by Youjiang
Lin \cite{Lin21},
\begin{equation*}
\vol{\opp E}^{-1/n} \geq \vol{\opp E^{\xi}}^{-1/n}
\end{equation*}
for $E\subset \mathbb R^{n}$ a set of finite perimeter and finite Lebesgue
measure and $\xi \in \snvtex $. For convex bodies, this inequality was established
by Lutwak, Yang, and Zhang \cite{LYZ2000} and for compact sets, by Wang
and Xiao \cite{WangXiao}.

%s8 #&#
\section[Proof of Theorem 1]{Proof of Theorem~\ref{thm_afracsobo}}
\label{sec_isoperimetric}

Before proving Theorem~\ref{thm_afracsobo}, we establish an inequality
involving the Lorentz quasi-norm, which might be of independent interest.
For $1\leq q < \infty $ and $1 \leq r < \infty $, the Lorentz quasi-norm
is defined by
\begin{equation*}
\|f\|_{q,r} = \left ( \int _{0}^{\infty }t^{r-1} \big\vert \{|f| \ge t
\}\big\vert ^{r/q} \dvtex t \right )^{1/r}
\end{equation*}
for measurable functions $f$ on $\mathbb R^{n}$.

%p18 #&#
\begin{proposition}
\label{thm_afracsobolorenz}
For $\,0<s<1$ and $f\in W^{s,1}(\mathbb R^{n})$,
%
%e18 #&#
\begin{equation}
\Vert f\Vert _{\frac{n}{n-s},1} \le \alpha _{n,s} n \omega _{n}^{({n+s})/n}
\vol{\ofpp{s} f}^{- s/ n}
\nonumber \end{equation}
There is equality if and only if $f$ has constant sign and for almost every
$t>0$, the level set $\{\vert f\vert \geq t\}$ has measure zero or is homothetic
to an ellipsoid (independent of $t$) up to a set of measure zero.
\end{proposition}
\begin{proof}
First, assume that $f$ is non-negative. By the co-area formula~\eqref{eq_coarea} and~\eqref{eq_dualmixed_func},
%
%e8 #&#
%e8 #&#
\begin{align}
\int _{\mathbb R^{n}}\int _{\mathbb R^{n}}
\frac{\vert f(x)-f(y)\vert}{\Vert x-y\Vert _{K}^{n+s}}\dvtex x\dvtex y &= 2
\int _{0}^{\infty }P_{s}(\{f \ge t\}, K)\dvtex t
\nonumber \\
&= n\int _{0}^{\infty }\tilde V_{-s}(K, \ofpp{s}\,\{f \ge t\})\dvtex t.
\nonumber \end{align}
Hence, setting $K= \ofpp{s}f$, we obtain from~\eqref{eq_dualmixed_func} and the dual mixed volume inequality~\eqref{eq_mixedvolume} that
%
%e8 #&#
%e8 #&#
\begin{align}
\vol{\ofpp{s} f} &= \int _{0}^{\infty }\tilde V_{-s}(\ofpp{s} f,
\ofpp{s} \,\{f \ge t\}) \dvtex t
\nonumber \\
&\ge \vol{\ofpp{s} f}^{({n+s})/n} \int _{0}^{\infty }
\vol{\ofpp{s} \,\{f \ge t\}}^{-s/n}\dvtex t
\nonumber \end{align}
and
%
%e8 #&#
\begin{equation}
\vol{\ofpp{s} f}^{-s/n}\ge \int _{0}^{\infty }
\vol{ \ofpp{s} \,\{f \ge t\}}^{-s/n} \dvtex t.
\nonumber \end{equation}
Since there is equality in~\eqref{eq_fracsobo} for indicator functions
of balls, the equality case of the dual mixed volume inequality, applied
to~\eqref{eq_fracperi_proj} for $K=\Bvtex $, shows that
\begin{equation*}
{\alpha _{n,s} n \omega _{n}^{({n+s})/n}} =\vol{\ofpp s\Bvtex}^{s/n}{
\vol{\Bvtex}^{({n-s})/n}}.
\end{equation*}
By the fractional Petty projection inequality from Theorem~\ref{thm_fracpetty}, it now follows that
%
%e28 #&#
\begin{equation}
\label{eq_frcpetty}
\begin{split} \vol{\ofpp{s} f}^{- s/ n} &\ge
\frac{\vol{\ofpp s\Bvtex}^{-s/n}}{\vol{\Bvtex}^{({n-s})/n}} \int _{0}^{
\infty }\vert \{f \ge t\}\vert ^{(n-s)/n}\dvtex t
\\
&= \frac {1} {\alpha _{n,s} n \omega _{n}^{({n+s})/n}} \|f\|_{
\frac{n}{n-s},1}.
\end{split}
\end{equation}
In particular, the last term is finite. The equality case follows from
the equality case of Theorem~\ref{thm_fracpetty}.

For general $f$ and $x,y\in \mathbb R^{n}$, we use
$\vert f(x) - f(y) \vert \geq \vert \vert f(x) \vert - \vert f(y)
\vert \vert $, where equality holds if and only if $f(x)$ and $f(y)$ are
both non-negative or non-positive. Applying this inequality in the definition
of $\ofpp sf$, we obtain
\begin{equation*}
\vol{\ofpp s{|f|}}^{-s/n} \leq \vol{\ofpp s{f}}^{-s/n}
\end{equation*}
with equality if and only if $f$ has constant sign for almost everywhere
on $\mathbb R^{n}$. Using~\eqref{eq_frcpetty} for $\vert f\vert $, we obtain
the inequality of the theorem and its equality case.
\end{proof}

\goodbreak
We include the proof of the following well-known lemma for completeness
(cf.\ \cite{Zhang99} for $s=1$).

%l19 #&#
\begin{lemma}%
\label{lem_zhang_s}
Let $0<s<1$. If $g:\mathbb R^{n}\to [0,\infty )$ is measurable, then
%
%e29 #&#
\begin{align}
\label{eq_zhang_s}
\Big(\int _{\mathbb R^{n}} g(x)^{n/({n-s})}\dvtex x\Big)^{({n-s})/n} &
\le \int _{0}^{\infty }\vol{\{g \ge t\}}^{({n-s})/n}\dvtex t.
\end{align}
If the right-hand side is finite, then there is equality precisely if
$g=c\chi _{E}$ for some $E\subset \mathbb R^{n}$ of finite measure and
$c\ge 0$.
\end{lemma}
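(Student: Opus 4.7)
The plan is to apply Minkowski's integral inequality to the layer cake decomposition
\[
g(x)=\int_0^\infty \chi_{\{g\ge t\}}(x)\d t.
\]
Setting $p=n/({n-s})$, the assumption $0<s<1$ gives $p>1$, so Minkowski's inequality for integrals yields
\[
\Big(\int_{\R^n}\Big(\int_0^\infty \chi_{\{g\ge t\}}(x)\d t\Big)^{p}\d x\Big)^{1/p}
\le
\int_0^\infty\Big(\int_{\R^n}\chi_{\{g\ge t\}}(x)^p\d x\Big)^{1/p}\d t.
\]
Since $\chi_{\{g\ge t\}}^p=\chi_{\{g\ge t\}}$ and $1/p=(n-s)/n$, this is exactly \eqref{eq_zhang_s}.

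For the equality characterization, first observe that if $g=c\,\chi_E$ with $|E|<\infty$ and $c\ge 0$, then both sides equal $c\,|E|^{(n-s)/n}$. Conversely, assume the right-hand side of \eqref{eq_zhang_s} is finite and equality holds. I would invoke the equality case of Minkowski's integral inequality for $p>1$, which forces the integrand $(x,t)\mapsto\chi_{\{g\ge t\}}(x)$ to factor as $\phi(x)\psi(t)$ almost everywhere on its support. Since $\chi_{\{g\ge t\}}$ takes values in $\{0,1\}$ and $t\mapsto\{g\ge t\}$ is nonincreasing, this factorization forces every level set of positive measure to coincide, up to a null set, with a single measurable set $E=\{\phi>0\}$; reading off the corresponding range of $t$ from $\psi$ then gives $g=c\,\chi_E$ for some $c\ge 0$, and $|E|<\infty$ follows from the finiteness assumption on the right side.

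The main subtlety is pinning down the equality case cleanly from Minkowski's integral inequality rather than the inequality itself, which is immediate. If one prefers a self-contained equality argument avoiding a direct appeal to the factorization criterion, the alternative is to argue by a strict convexity / H\"older-type rigidity: the inequality
\[
\Big(\int_{\R^n}\Big(\int_0^\infty \chi_{A_t}(x)\d t\Big)^p\d x\Big)^{1/p}
\le \int_0^\infty |A_t|^{1/p}\d t
\]
is strict whenever there exist $0<t_1<t_2$ with $|A_{t_1}\setminus A_{t_2}|>0$ and $|A_{t_2}|>0$, because on $A_{t_2}$ the inner integral strictly exceeds what it would be if $A_{t_1}$ were replaced by $A_{t_2}$, and $x\mapsto x^p$ is strictly convex. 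Hence a.e.\ all nonempty level sets $\{g\ge t\}$ must agree up to a null set, yielding $g=c\,\chi_E$.
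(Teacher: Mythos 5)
Your proof is correct, but it follows a genuinely different route from the paper. You apply Minkowski's integral inequality (the continuous/integral form, sometimes called Minkowski's inequality for integrals) with exponent $p=n/(n-s)>1$ to the layer cake decomposition $g(x)=\int_0^\infty\chi_{\{g\ge t\}}(x)\,\d t$, and for the equality case you invoke the product-form rigidity of that inequality for $p>1$. The paper instead gives a self-contained, elementary argument: by Fubini,
\[
\int_{\R^n}g^{n/(n-s)}\,\d x=\frac{n}{n-s}\int_0^\infty t^{s/(n-s)}\,\vol{\{g\ge t\}}\,\d t,
\]
then it bounds the integrand pointwise using the monotonicity of $t\mapsto\vol{\{g\ge t\}}$ via
\[
t^{s/(n-s)}\vol{\{g\ge t\}}\le\Big(\int_0^t\vol{\{g\ge r\}}^{(n-s)/n}\,\d r\Big)^{s/(n-s)}\vol{\{g\ge t\}}^{(n-s)/n}
=\frac{n-s}{n}\,\frac{\d}{\d t}\Big(\int_0^t\vol{\{g\ge r\}}^{(n-s)/n}\,\d r\Big)^{n/(n-s)},
\]
and integrates. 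The paper's route has the advantage of keeping the whole argument (including the equality case, which falls out from the single monotonicity inequality being an equality a.e.) elementary and free of external references; your route is shorter and transparent, at the cost of black-boxing both Minkowski's integral inequality and, for the equality case, its factorization criterion, which is classical (Hardy--Littlewood--P\'olya, Theorem 202) but perhaps less universally remembered than Minkowski's inequality itself. Both are complete; the one thing worth tightening in your write-up is that the equality/strictness reasoning must be carried out under the standing assumption that the right-hand side is finite (otherwise the factorization criterion is not available), but you do note this assumption, so the proof stands.
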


\begin{proof}
Let the right-hand side of~\eqref{eq_zhang_s} be finite. By Fubini's theorem,
we have
%
%e8 #&#
\begin{align}
\int _{\mathbb R^{n}} g(x)^{n/({n-s})}\dvtex x &= \frac{n}{n-s} \int _{0}^{
\infty }t^{ s/( {n-s})} \vol{\{g \ge t\}} \dvtex t.
\nonumber \end{align}
Since $r\mapsto \vol{\{g \ge r\}}$ is monotone decreasing, we obtain for
$t>0$,
%
%e8 #&#
%e8 #&#
%e8 #&#
\begin{align}
t^{ s/({n-s})} \vol{\{g \ge t\}} &=\big(t\, \vol{\{g \ge t\}}^{( {n-s})/n}
\big)^{ s/({n-s})} \vol{\{g \ge t\}}^{({n-s})/n}
\nonumber \\
&\le \bigg(\int _{0}^{t} \vol{\{g \ge r\}}^{({n-s})/n}\dvtex r\bigg)^{ s/({n-s})} \vol{\{g \ge t\}}^{({n-s})/n}
\nonumber \\
&=\frac{n-s}{n} \frac{\dvtex \,\, }{\dvtex t} \bigg(\int _{0}^{t} \vol{\{g \ge r\}}^{({n-s})/n}\dvtex r\bigg)^{ n/({n-s})}.
\nonumber \end{align}
Hence
%
%e8 #&#
\begin{align}
\Big(\int _{\mathbb R^{n}} g(x)^{ n/({n-s})}\dvtex x\Big)^{({n-s})/n} &
\le \int _{0}^{\infty }\vol{\{g \ge t\}}^{({n-s})/n}\dvtex t
\nonumber \end{align}
and there is equality precisely if $g=c\chi _{E}$ for some
$E\subset \mathbb R^{n}$ of finite measure and $c\ge 0$.
\end{proof}

\begin{proof}[Proof of Theorem~\ref{thm_afracsobo}]
The first inequality follows from Proposition~\ref{thm_afracsobolorenz} and Lemma~\ref{lem_zhang_s} applied to
$|f|$. The equality case is a consequence of the equality cases of these
inequalities.

\eject

For the second inequality, we set $K=\Bvtex $ in~\eqref{eq_dualmixed_func} and apply the dual mixed volume inequality~\eqref{eq_mixedvolume} to obtain
%
%e8 #&#
%e8 #&#
\begin{align}
\int _{\mathbb R^{n}}\int _{\mathbb R^{n}}
\frac{\vert f(x)-f(y)\vert}{\vert x-y\vert ^{n+s}}\dvtex x\dvtex y &= n
\tilde V_{-s}(\Bvtex , \ofpp sf)
\nonumber \\
&\geq n \omega _{n}^{( {n+s})/n} \vol{\ofpp sf}^{-s/n}.
\nonumber \end{align}
There is equality precisely if $\ofpp sf$ is a ball, which is the case
for radially symmetric functions.
\end{proof}

We remark that we have shown that there is equality in the second inequality
of Theorem~\ref{thm_afracsobo} precisely if $\ofpp sf$ is a ball.

\goodbreak

%s9 #&#
\section{Fractional Zhang projection inequalities and radial mean bodies}
\label{sec_GZ}

Let $E\subset \mathbb R^{n}$ be a convex body. For $p>-1$, Gardner and
Zhang \cite{GZ} defined the radial $p$-th mean body of $E$, by its radial
function for $\xi \in \snvtex $, as
\begin{equation*}
\rho _{\omr p E}(\xi )^{p} = \frac {1}{\vol{E}}\int _{E} \rho _{E-x}(
\xi )^{p} \dvtex x
\end{equation*}
for $p\ne 0$ and as
\begin{equation*}
\log (\rho _{\omr 0 E}(\xi )) = \frac {1}{\vol{E}}\int _{E} \log (
\rho _{E-x}(\xi )) \dvtex x.
\end{equation*}
They showed that $\omr p E$ is a star body for $p>-1$ and a convex body
for $p\ge 0$.

Let $0<s<1$. For a star body $K\subset \mathbb R^{n}$ and a convex body
$E\subset \mathbb R^{n}$, we obtain from Fubini's theorem that
%
%e9 #&#
%e30 #&#
%e9 #&#
\begin{align}
\int _{E} \int _{E^{c}} \frac{1}{\Vert x-y\Vert _{K}^{n+s}}\dvtex x\dvtex y &=
\int _{E} \int _{(E-x)^{c}} \frac{1}{\Vert z\Vert _{K}^{n+s}}\dvtex z\dvtex x
\nonumber \\
\label{eq_Rms}
&=\frac{1}{s} \int _{E} \int _{\snvtex} \rho _{K}(\xi )^{n+s}\rho _{E-x}(
\xi )^{-s}\dvtex \xi \dvtex x
\\
&=\frac{\vol{E}}{s} \int _{\snvtex} \rho _{K}(\xi )^{n+s}\rho _{\omr {-s} E}(
\xi )^{-s}\dvtex \xi .
\nonumber \end{align}
Hence, by~\eqref{eq_fracperi_proj},
%
%e9 #&#
\begin{equation}
\tilde V_{-s}(K, \ofpp{s} E) =\frac {2\vol{E}}{s}\, \tilde V_{-s}(K,
\omr {-s} E)
\nonumber \end{equation}
for every star body $K$. By the equality case of the dual mixed volume
inequality~\eqref{eq_mixedvolume}, it follows that
%
%e9 #&#
\begin{equation}
%!LEAP WARNING:  script commented \label{eq_radial_projection}
\ofpp{s} E =\Big(\frac {s}{2\vol{E}}\Big)^{\frac{1}{s}} \omr {-s} E
\nonumber \end{equation}
for every convex body $E\subset \mathbb R^{n}$.

Hence, we can reformulate the Gardner--Zhang inequalities~\eqref{eq_gardnerzhang} obtained in \cite{GZ} to get inequalities for fractional
polar projection bodies. In particular, Theorem~5.5 and Lemma 5.7 from
\cite{GZ} imply that
%
%e9 #&#
\begin{equation}
%!LEAP WARNING:  script commented \label{eq_fraczhang2}
\Big(\frac{\vol{\ofpp{s} E}}{\vol{\ofpp{s} \Svtex}}\Big)^{- s/ n} \le
\Big(\frac{\vol{ E}}{\vol{\Svtex}}\Big)^{ {(n-s)}/n}
\nonumber \end{equation}
for $E\subset \mathbb R^{n}$ a convex body and $0<s<1$ with equality precisely
if $E$ is a simplex, where $\Svtex $ is any $n$-dimensional simplex in
$\mathbb R^{n}$. Letting $s\to 1^{-}$ and using Theorem~\ref{thm_limitM},
we obtain the Zhang projection inequality \cite{Zhang91} (without the equality
case), which was reproved in \cite{GZ}:
%
%e9 #&#
\begin{equation}
%!LEAP WARNING:  script commented \label{eq_zhang}
\Big(\frac{\vol{\opp E}}{\vol{\opp \Svtex}}\Big)^{- 1/ n} \le \Big(
\frac{\vol{ E}}{\vol{\Svtex}}\Big)^{ {(n-1)}/n}
\nonumber \end{equation}
for $E\subset \mathbb R^{n}$ a convex body.

Conversely, we can reformulate Theorem~\ref{thm_fracpetty} for radial mean
bodies and obtain sharp affine isoperimetric inequalities for radial
$p$-th mean bodies for $-1<p<0$. Using a variation of our approach to Theorem~\ref{thm_fracpetty}, we obtain the following result. We have included the
case $p=n$ from \cite[Lemma 5.7]{GZ} in the statement of the result.

%t20 #&#
\begin{theorem}%
\label{thm_meanradial}
If $\,E\subset \mathbb R^{n}$ is a convex body, then
%
%e31 #&#
\begin{equation}
\label{eq_inequalities}
\begin{array}{r@{\hspace*{3pt}}c@{\hspace*{3pt}}ll}
\displaystyle \frac{\vol{\omr{p} E}}{\vol{ E}} &\le &\displaystyle
\frac{\vol{\omr{p} \Bvtex}}{\vol{\Bvtex}}&\quad \text{ for } \,-1<p<n,
\\[12pt]
\displaystyle \frac{\vol{\omr{p} E}}{\vol{ E}}&\ge &\displaystyle
\frac{\vol{\omr{p} \Bvtex}}{\vol{\Bvtex}}&\quad \text{ for }\,p>n,
\end{array}
\end{equation}
with equality if and only if $E$ is an ellipsoid. Here,
%
%e20 #&#
\begin{equation}
\frac{\vol{\omr{p} \Bvtex}}{\vol{\Bvtex}}=\displaystyle \Big(
\frac{2^{p+1}\omega _{n+p}}{(p+1)\omega _{n}\omega _{p+1}}\Big)^{n/p}
\nonumber \end{equation}
for $p>-1$ and $p\ne 0,n$, and
\begin{equation*}
\frac{\vol{\omr 0 \Bvtex}}{\vol{\Bvtex}}=2^{n} e^{\frac {n}{2}(\psi (
\frac{1}{2})-\psi (\frac{n}{2}+1))},
\end{equation*}
where $\psi $ is the digamma function. Moreover,
\begin{equation*}
\frac{\vol{\omr n E}}{\vol{E}}=1
\end{equation*}
for every convex body $E\subset \mathbb R^{n}$.
\end{theorem}

\eject

\begin{proof}
The inequality, including the equality case, for $-1<p<0$ is covered in
Theorem~\ref{thm_fracpetty}. We consider the cases $p\ge 0$. Let
$K\subset \mathbb R^{n}$ be a star body. For $p>0$, it follows from Fubini's
theorem that
%
%e32 #&#
\begin{equation}
\begin{split} \int _{E} \int _{E} \frac{1}{\Vert x-y\Vert _{K}^{n-p}}
\dvtex x\dvtex y &=\int _{E} \int _{E-x} \frac{1}{\Vert z\Vert _{K}^{n-p}}\dvtex z
\dvtex x
\\
&=\frac{1}{p} \int _{E} \int _{\snvtex} \rho _{K}(\xi )^{n-p} \rho _{E-x}(
\xi )^{p}\dvtex \xi \dvtex x
\\
&=\frac{\vol{E}}{p} \int _{\snvtex} \rho _{K}(\xi )^{n-p} \rho _{\omr {p} E}(
\xi )^{p} \dvtex \xi
\\
&=\frac{n \vol{E}}{p}\, \tilde V_{p}(K,\omr p E)
\label{eq_mixedvolume_RpE}
\end{split}
\end{equation}
for every convex body $E\subset \mathbb R^{n}$.

For $0<p<n$, as in the proof of Theorem~\ref{thm_perburchard}, we write
%
%e9 #&#
\begin{multline}
\int _{E}\int _{E} \frac{1}{\|x-y\|^{n-p}_{K}} \dvtex x \dvtex y
\\
= \int _{0}^{\infty }\int _{\mathbb R^{n}}\int _{\mathbb R^{n}} \chi _{E}(x)
\chi _{E}(y) \chi _{t^{- 1/({n-p})}K}(x-y) \dvtex x \dvtex y \dvtex t.
\nonumber \end{multline}
The Riesz rearrangement inequality, Theorem~\ref{thm_BLL}, and Theorem~\ref{thm_burchard} applied to the inner integral imply that
%
%e9 #&#
\begin{equation}
%!LEAP WARNING:  script commented \label{eq_rieszee}
\int _{E} \int _{E} \frac{1}{\Vert x-y\Vert _{K}^{n-p}}\dvtex x\dvtex y\le
\int _{E^{\star}} \int _{E^{\star}}
\frac{1}{\Vert x-y\Vert _{K^{\star}}^{n-p}}\dvtex x\dvtex y,
\nonumber \end{equation}
with equality precisely if $E$ is equivalent to an ellipsoid homothetic
to~$K$. By~\eqref{eq_mixedvolume_RpE}, this is equivalent to
%
%e9 #&#
\begin{equation}
\label{eq_rieszee1}
\tilde V_{p}(K, \omr p E)
\le \tilde V_{p}(K^{\star}, \omr p E^{\star}),
\end{equation}
with equality precisely if $E$ is equivalent to an ellipsoid homothetic
to~$K$. Setting $K=\omr p E$, we obtain from~\eqref{eq_rieszee1} and the
dual mixed volume inequality~\eqref{eq_mixedvolume} for $0<p<n$ that
%
%e9 #&#
%e9 #&#
%e9 #&#
\begin{align}
\vol{\omr {p} E} &=\tilde V_{p}(\omr p E, \omr p E)
\nonumber \\
&\le \tilde V_{p}((\omr p E)^{\star}, \omr p (E^{\star}))
\nonumber \\
&\le \vol{\omr p E}^{\frac{n-p}{n}} \vol{\omr p (E^{\star})}^{
\frac {p}{n}}.
\nonumber \end{align}
Hence,
%
%e33 #&#
\begin{equation}
\label{eq_dmv1}
\vol{\omr {p} E}\le \vol{\omr {p} (E^{\star})}
\end{equation}
with equality if and only if $E$ is equivalent to an ellipsoid.

\eject

For $p>n$, let $c=\sup \{\Vert x-y\Vert _{K}: x,y\in E\}$. Note that
$c<\infty $ for a convex body $E\subset \mathbb R^{n}$. We write
%
%e9 #&#
%e9 #&#
%e9 #&#
\begin{align}
\int _{E}\int _{E} \frac{1}{\|x-y\|^{n-p}_{K}} &\dvtex x \dvtex y
\nonumber \\
&= c\, \vol{E}^{2} - \int _{E}\int _{E} (c-
\frac{1}{\|x-y\|^{n-p}_{K}}) \dvtex x \dvtex y
\nonumber \\
&= c\, \vol{E}^{2} - \int _{0}^{c} \int _{E}\int _{E} \chi _{(c-t)^{1/({p-n})}K}(x-y)
\dvtex x \dvtex y\dvtex t.
\nonumber \end{align}
Applying the Riesz rearrangement inequality, Theorem~\ref{thm_BLL}, and
Theorem~\ref{thm_burchard} to the inner integral and using~\eqref{eq_mixedvolume_RpE}, we obtain that
%
%e9 #&#
\begin{equation}
\label{eq_rieszee2}
\tilde V_{p}(K, \omr p E)
\ge \tilde V_{p}(K^{\star}, \omr p E^{\star})
\end{equation}
holds with equality precisely if $E$ is equivalent to an ellipsoid homothetic
to $K$. Setting $K=\omr p E$, we obtain from~\eqref{eq_rieszee2} and the
dual mixed volume inequality~\eqref{eq_mixedvolume} for $p>n$ that
%
%e9 #&#
%e9 #&#
%e9 #&#
\begin{align}
\vol{\omr {p} E} &=\tilde V_{p}(\omr p E, \omr p E)
\nonumber \\
&\ge \tilde V_{p}((\omr p E)^{\star}, \omr p (E^{\star}))
\nonumber \\
&\ge \vol{\omr p E}^{\frac{n-p}{n}} \vol{\omr p (E^{\star})}^{
\frac {p}{n}}.
\nonumber \end{align}
Hence,
%
%e34 #&#
\begin{equation}
\label{eq_dmv2}
\vol{\omr {p} E}\ge \vol{\omr {p} (E^{\star})}
\end{equation}
with equality if and only if $E$ is equivalent to an ellipsoid.

Next, we consider the case $p=0$. Let $K\subset \mathbb R^{n}$ be a star
body. Using definition~\eqref{eq_dVlog}, we obtain from Fubini's theorem
that
%
%e9 #&#
%e9 #&#
%e9 #&#
%e9 #&#
%e9 #&#
%e9 #&#
%e9 #&#
\begin{align}
n &\vol{E}\vol{K}\, \dVlog (K,\omr0 E)
\nonumber \\
&= \vol{E}\int _{\snvtex} \rho _{K}(\xi )^{n}\Big(\frac {1}{\vol{E}}
\int _{E} \log (\rho _{E-x}(\xi )) - \log (\rho _{K}(\xi )) \dvtex x\Big)
\dvtex \xi
\nonumber \\
&= \int _{E} \int _{\snvtex} \rho _{K}(\xi )^{n} \int _{\rho _{K}(\xi )}^{
\rho _{E-x}(\xi )} \frac {1}{r} \dvtex r \dvtex \xi \dvtex x
\nonumber \\
&= \int _{E} \int _{(E-x) \setminus K} \rho _{K}(z)^{n} \dvtex z \dvtex x-
\int _{E} \int _{K \setminus (E-x)} \rho _{K}(z)^{n} \dvtex z \dvtex x
\nonumber \\
&=\int _{E} \int _{E-x} \chi _{[0,1)}(\rho _{K}(z))\,\rho _{K}(z)^{n}
\dvtex z \dvtex x
\nonumber \\
&
\hskip 60pt
- \int _{E} \int _{(E-x)^{c}} \chi _{[1,\infty )}(\rho _{K}(z))\,
\rho _{K}(z)^{n} \dvtex z \dvtex x
\nonumber \\
&=\int _{E}\int _{E}
\frac{\chi _{(1,\infty )}(\|x-y\|_{K}) }{\|x-y\|_{K}^{n}} \dvtex x\dvtex y -
\int _{E}\int _{E^{c}}
\frac{\chi _{[0,1]}(\|x-y\|_{K})}{\|x-y\|_{K}^{n}}\dvtex x\dvtex y
\nonumber\\
&=\int_E\int_E \min\biggl\{1, \frac1{\|x-y\|_K^{n}}\biggr\} \dvtex x\dvtex y
\nonumber \\
&\quad {} - \int_E\int_{E^c} \max \biggl\{0, \frac1{\|x-y\|_K^{n}}-1\biggr\}\dvtex x\dvtex y -\vol{K}\,\vol{E}
\nonumber
\end{align}
for every convex body $E\subset \mathbb R^{n}$. Similar to the case
$0<p<n$ and to the proof of Theorem~\ref{thm_perburchard}, the Riesz rearrangement
inequality, Theorem~\ref{thm_BLL}, and Theorem~\ref{thm_burchard} show
that
%
%e35 #&#
\begin{equation}
\label{eq_rieszee3}
\dVlog (K,\omr0 E) \leq \dVlog (K^{\star},\omr 0 (E^{\star})),
\end{equation}
with equality if and only if $E$ is equivalent to an ellipsoid homothetic
to $K$. Setting $K=\omr 0 E$, we obtain from~\eqref{eq_rieszee3} and the
logarithmic dual mixed volume inequality~\eqref{eq_mixedvolume_log} that
%
%e9 #&#
\begin{align}
0 = \dVlog (\omr0 E,\omr0 E) \leq \dVlog ((\omr0 E)^{\star},\omr0 (E^{
\star})) \leq%
\frac {1}{n} \log \Big(\frac{\vol{\omr0(E^{\star})}}{\vol{\omr0E}}
\Big)
\nonumber \end{align}
and deduce that
%
%e36 #&#
\begin{equation}
\label{eq_dmv3}
\vol{\omr0 E} \leq \vol{\omr0(E^{\star})}
\end{equation}
with equality if and only if $E$ is equivalent to an ellipsoid.

Since $\omr p$ is positively homogeneous of degree 1, the inequalities
in~\eqref{eq_inequalities}, including the equality cases for
$0< p<n$, $p>n$ and $p=0$ follow from~\eqref{eq_dmv1},~\eqref{eq_dmv2}, and~\eqref{eq_dmv3}, respectively.

Finally, we calculate the constants. For $p>0$, we obtain by~\eqref{eq_mixedvolume_RpE} that
%
%e37 #&#
\begin{equation}
\label{eq_fracomr}
\Big(\frac{\vol{\omr p \Bvtex}}{\vol{\Bvtex}}\Big)^{1/n} = \Big(
\frac {p}{n\omega _{n}^{2}}\int _{\Bvtex }\int _{\Bvtex}
\frac{1}{\vert x-y\vert ^{n-p}}\dvtex x\dvtex y\Big)^{1/p}
\end{equation}
and by \cite[Theorem 7.2.7 and Theorem 8.6.6]{SchneiderWeil} that
%
%e38 #&#
\begin{equation}
\label{eq_riesz}
\int _{\Bvtex }\int _{\Bvtex} \frac{1}{\vert x-y\vert ^{n-p}}\dvtex x\dvtex y=
\frac{2^{p+1}n\omega _{n}\omega _{n+p}}{p(p+1)\omega _{p+1}}.
\end{equation}
For $-1<p<0$, we obtain by~\eqref{eq_Rms} that
%
%e9 #&#
\begin{equation}
\Big(\frac{\vol{\omr p \Bvtex}}{\vol{\Bvtex}}\Big)^{1/n} = \Big(
\frac {-p}{n\omega _{n}^{2}}\int _{\Bvtex }\int _{(\Bvtex )^{c}}
\frac{1}{\vert x-y\vert ^{n-p}}\dvtex x\dvtex y\Big)^{1/p}
\nonumber \end{equation}
and obtain the result from~\eqref{eq_psb}. Using~\eqref{eq_fracomr} and~\eqref{eq_riesz}, we obtain that
%
%e9 #&#
\begin{align}
\Big(\frac{\vol{\omr 0 \Bvtex}}{\vol{\Bvtex}}\Big)^{1/n} &=\lim _{p\to 0^{+}}
\Big(\frac{2^{p+1}\omega _{n+p}}{(p+1)\omega _{n}\omega _{p+1}}\Big)^{1/p}
= {2\, e^{\frac{1}{2}(\psi (\frac{1}{2})-\psi (\frac{n}{2}+1))}}.
\nonumber \end{align}
The case $p=n$ follows from the polar coordinate formula of volume combined
with the definition of the radial $n$-th mean body.
\end{proof}

\end{document}